\def\@fnsymbol#1{\ensuremath{\ifcase#1\or *\or \dagger\or \ddagger\or
		\mathsection\or \mathparagraph\or \|\or **\or \dagger\dagger
		\or \ddagger\ddagger \else\@ctrerr\fi}}
\newcommand{\ssymbol}[1]{^{\@fnsymbol{#1}}}
\DeclareMathOperator{\cir}{Circ}
\DeclareMathOperator{\In}{In}
\DeclareMathOperator{\ra}{rank}
\DeclareMathOperator{\s}{span}
\DeclareMathOperator{\diag}{diag}
\newtheorem{theorem}{Theorem}[section]
\newtheorem{remark}[theorem]{Remark}
\newtheorem{lemma}[theorem]{Lemma}
\newtheorem{definition}[theorem]{Definition}
\numberwithin{equation}{section}
\title{The Moore-Penrose Inverse of the Distance Matrix  of a Helm Graph}
 \author{I. Jeyaraman\thanks{jeyaraman@nitt.edu},
 	T. Divyadevi\thanks{tdivyadevi@gmail.com} 
 	and  R. Azhagendran\thanks{razhagu2014@gmail.com}
 	\\\\
 	Department of Mathematics\\
 	National Institute of 
 	Technology		Tiruchirappalli-620 015,  India 
 	 }
\date{\today}
\begin{document}
	\maketitle
\begin{abstract}
		
		In this paper, we give necessary and sufficient conditions for
		a  real symmetric matrix, and in particular, for the distance matrix
		$D(H_n)$ of a helm graph
		$H_n$ to have
		their Moore-Penrose inverses as the sum of a symmetric Laplacian-like
		matrix and a rank one matrix. As a
		consequence,
		we present a short proof of the inverse formula, given by 	Goel
		(Linear Algebra Appl. 621:86--104, 2021), for $D(H_n)$
		when $n$ is even. Further, we
		derive
		a  formula for the Moore-Penrose inverse of singular
		$D(H_n)$ that is analogous to the formula for $D(H_n)^{-1}$.
		Precisely, if $n$ is odd, we find a symmetric positive semidefinite
		Laplacian-like
		matrix
		$L$ of order
		$2n-1$ and a vector $\mathbf{w}\in \mathbb{R}^{2n-1}$ such that
		\begin{eqnarray*}
			D(H_n)\ssymbol{2} =	-\frac{1}{2}L +
			\frac{4}{3(n-1)}\mathbf{w}\mathbf{w^{\prime}},
		\end{eqnarray*}
		where the rank of
		$L$ is
		$2n-3$. We also investigate the inertia of
		$D(H_n)$.
\end{abstract}
\smallskip
\noindent \textbf{Keywords.}   Distance matrix, Helm graph,  Inverse, 
Moore-Penrose inverse, Inertia.

\smallskip
\noindent \textbf{Subject Classification (2020).} 05C12; 05C50; 15A09. 

\section{Introduction}
We consider a simple connected  graph $G$ on $n$ vertices with the vertex set 
$\{v_1,v_2,\ldots,v_n\}$. In literature, several matrices have been 
associated with 
$G$ through which many structural properties of the graphs have been studied. 
A few classical graph matrices are the adjacency matrix, the Laplacian 
matrix, the distance matrix, the incidence matrix etc., 
see \cite{Bapat, Graph spectra 	book London ms}. Let us recall the 
distance matrix which is relevant to our discussion. The
 \textit{distance 
	matrix}  $D(G):=(d_{ij})$ of $G$, is an $n \times n$ symmetric matrix 
 with $d_{ij}=d(v_i,v_j)$  for all $i$ and $j$, where $d(v_i,v_j)$ denotes the
	length of a shortest path between the vertices $v_i$ and $v_j$. This matrix 
	has been widely studied in the literature and has applications in 
	chemistry, physics, computer science, 
	etc., see \cite{Bapat, Graph spectra book London ms} and the 
	references therein.  For a brief introduction, we refer to the survey 
	article
	\cite{Distance spectra survey}.

A basic problem in graph 
matrices is to give a simple formula to compute the inverses of these matrices. 
This 
problem has been extensively studied in the literature, see \cite{Balaji,Bapat, 
helm graph,Graham, Fan graph, distance well defined graph}. To state the famous
inverse formula given by Graham and 
Lov\'{a}sz 
\cite{Graham}, let us recall 
the Laplacian matrix corresponding to $G$. 
The 
\textit{Laplacian matrix} of $G$ is given by ${L}:=(l_{ij})$ where  $l_{ii}$ is
the degree of the vertex $v_i$, $l_{ij}=-1$ if
$v_i$ and $v_j$ are adjacent and zero elsewhere.  Then ${L}$ is a 
symmetric matrix of order $n$ whose row sums are zero, positive semidefinite 
and rank of ${L}$ is 
$n-1$ (see \cite{Bapat}). In \cite{Graham}, an expression for  the 
inverse of the distance matrix $D(T)$ of a tree $T$ is obtained, which is given 
by 
\begin{eqnarray}\label{Lovaz}
D(T)^{-1}=-\frac{1}{2}{L}+\frac{1}{2(n-1)}\bm{\tau\tau^{\prime}},
\end{eqnarray}
where $\mathbf{\tau}^{\prime}=\big(2-\text{deg}(v_1), 2-\text{deg}(v_2), 
\ldots, 
2-\text{deg}(v_n)\big)$ and $\text{deg}(v_i)$ is the degree of the vertex 
$v_i$. 
Motivated 
by  this,  analogous inverse formulae
for the distance 
matrices of various graphs have been established.  
For instance, the distance matrices of wheel graphs $W_n$ when $n$ is even
\cite{Balaji},  helm graphs with even number of vertices 
\cite{helm graph} and fan graphs \cite{Fan graph}. Let us mention that 
for all these matrices, the inverse formulae are obtained by replacing the 
Laplacian matrix by Laplacian-like matrix which we recall next.  A symmetric 
matrix $\bar{L}$ is said to be 
\textit{Laplacian-like} if all its 
row sums are zero \cite{distance well defined graph}. The results state that 
the inverse formulae for all the above matrices are
expressed as the 
sum of a symmetric positive semidefinite Laplacian-like matrix and a 
rank-one matrix. 
Similar inverse formulae have been determined for the distance
matrices of weighted trees, cycles, complete graphs, block graphs and bi-block 
graphs, see \cite{distance well defined graph} and the references therein for 
more graphs.

Another basic problem in this topic is to find the Moore–Penrose inverses of 
singular and 
rectangular 
graph matrices, which have been well studied in the literature (see \cite{MP 
	inverse,Balaji odd wheel graph,Bapat, sivakumar1,sivakumar2} and references 
	therein).
 Let us recall that for an  $m \times n$ real matrix 
$M$, an $n \times m$ matrix $X$ is said to be  the \textit{Moore-Penrose 
inverse} of $M$ if 
$MXM=M,~ XMX=X$,~ $(MX)^{\prime}=MX$ and $(XM)^{\prime}=XM$. It 
is known that the
Moore–Penrose inverse always exists and is unique. It is  denoted by 
$M\ssymbol{2}$. Furthermore, $M\ssymbol{2}$ coincides 
with the usual inverse $M^{-1}$ if $M$ is non-singular, see \cite{Moore inverse 
book} for more details. 
Inspired by the inverse formula result of Graham and  Lov\'{a}sz \cite{Graham}, 
the Moore-Penrose 
inverse of the distance matrix $D(W_n)$ of the wheel graph $W_n$, similar to 
(\ref{Lovaz}), was obtained for the singular case \cite{Balaji odd wheel 
graph}. More precisely, if $n$ is odd, then  the Moore-Penrose 
inverse of $D(W_n)$  is given by
\begin{equation}\label{Balaji MP formula}
D(W_n)\ssymbol{2}=
-\frac{1}{2}{{L}}+\frac{4}{n-1}\mathbf{t}\mathbf{t^{\prime}},
\end{equation}
where ${L}$ is a real symmetric positive semidefinite Laplacian-like matrix of 
order $n$  and 
$\mathbf{t} \in 
\mathbb{R}^{n}$. 

This paper focuses on the distance matrix of a helm graph $H_n$, which is 
 a generalization of star graph 
\cite{EDM}.
Several studies on helm 
	graphs have been carried out in the literature. The  
	resolving 
	domination numbers of helm graphs were analysed in \cite{domination no 
	helm}. It was proved in \cite{EDM} that the 
	distance matrix $D(H_n)$ of a 
	helm graph $H_n$ is 
	a circum 
	Euclidean distance matrix. In \cite{helm 
		graph}, it was shown that if $n$ is even then $D(H_n)$ is non-singular. 
Further, the problem of finding the inverse of $D(H_n)$ was considered and  the 
inverse formula for $D(H_n)$, similar to (\ref{Lovaz}), was provided using the 
inverse of the distance 
matrix $D(W_n)$ of the 
wheel graph $W_n$. That is,
\begin{equation}\label{Shivani formula1}
	D(H_n)^{-1}=-\frac{1}{2}\mathcal{L}+
	\frac{4}{3(n-1)}\mathbf{w}\mathbf{w}^{\prime},
\end{equation}
where $\mathcal{L}$ is a Laplacian-like  matrix and $\mathbf{w} \in 
\mathbb{R}^{2n-1}$, see Theorem $3$ in \cite{helm graph}.
Motivated by the above-mentioned results on $H_n$, our aim herein is to explore 
more results on 
$D(H_n)$, which may be helpful in studying the distance matrices of 
generalizations of 
 star graphs.

 In Section $\ref{helm graph sec 3}$, we  
show 
that $D(H_n)$ is singular if 
$n$ is odd and then derive the inertia of $D(H_n)$ by finding its rank. We 
study the Moore-Penrose inverse formula for $D(H_n)$ in Section 
$\ref{Formula for the MP inverse of helm}$. We  first establish  necessary and 
sufficient 
conditions under which  the Moore-Penrose inverses of symmetric matrices 
generally and 
$D(H_n)$ in particular, is of the form similar to (\ref{Shivani formula1}). 
Using 
these 
results, we give an alternative proof of 
the inverse formula given in (\ref{Shivani formula1}). Further, we establish an 
analogous formula for $D(H_n)\ssymbol{2}$ for the singular case. That is, if 
$n$ is odd, we 
construct a symmetric Laplacian-like matrix $L$ of order $2n-1$ and a vector 
$\mathbf{w} \in \mathbb{R}^{2n-1}$ such 
that $D(H_n)\ssymbol{2}$ is expressed as the sum of a constant multiple of $L$ 
and a rank one symmetric matrix defined by $\mathbf{w}$ (see Theorem 
\ref{Inverse Formula}). It is noteworthy to mention that our approach of 
finding 
 $D(H_n)\ssymbol{2}$ is significantly different from those of 
\cite{Balaji odd wheel graph,Balaji,helm graph,Fan graph}. Unlike \cite{helm 
graph}, the 
techniques employed to obtain a different proof for 
$D(H_n)^{-1}$ does not 
depend on $D(W_n)^{-1}$ and the proof given 
for $D(H_n)\ssymbol{2}$ formula is also  independent of $D(W_n)\ssymbol{2}$. 
 Finally, we prove that the constructed $L$ is a positive 
semidefinite matrix of rank $2n-3$ using 
the concept of simultaneous diagonalization.

\section{Preliminaries}\label{prelims}
In this section, we  fix the notations and collect some 
basic results which will be needed in this paper.  
For a matrix $M$, we denote the $i$-th row of $M$, the $j$-th column of 
$M$, the transpose of $M$, the range of $M$, the null space 
of $M$ and the rank of $M$ by $M_{i*}$, $M_{*j}$,
$M^{\prime}$, $R(M)$, $N(M)$ and $\ra(M)$ respectively. We write 
the determinant of a square matrix  $M$ as $\det(M)$ and the identity 
matrix of order $n$ as $I_n$.  The symbol 
$\diag(\mu_{1},\mu_{2},\ldots,\mu_{n})$ 
represents the $n \times n$ diagonal matrix whose $j$-th diagonal entry is 
$\mu_j$.
The 
notations $J_{n}$ and   $O_{n}$ are used to denote the matrices with 
all elements equal to $1$ and $0$ respectively. The subscripts are omitted if 
the 
{order} of the matrix is clear from the context. 
All the vectors are assumed to be column vectors, and are denoted 
by lowercase boldface letters.   We use the notations $\mathbf{e}$ 
and $\mathbf{0}$ to 
represent the vectors 
in $\mathbb{R}^{n}$ 
whose coordinates are all one and zero respectively. 
Let  $\cir(\mathbf{a^{\prime}})$ denote the 
circulant matrix of order $n$ defined by the vector 
$\mathbf{a}=(a_1, a_2, \ldots, a_{n})^{\prime} \in \mathbb{R}^{n}$ and
the notation $T_n{(2,1,1)}$ stands for the 
tridiagonal matrix of order $n$ whose diagonal entries are all $2$.
That is,
\begin{equation*}\label{C}
	\cir(\mathbf{a}^{\prime}) = 
	\begin{bmatrix}
		a_1&a_2&a_3&\cdots&a_{n-1}&a_{n}\\
		a_{n}&a_1&a_2&\cdots&a_{n-2}&a_{n-1}\\
		a_{n-1}&a_{n}&a_1&\cdots&a_{n-3}&a_{n-2}\\
		\vdots&\vdots&&\ddots&\vdots&\vdots\\
		a_3&a_4&a_5&\cdots&a_1&a_2\\
		a_2&a_3&a_4&\cdots&a_{n}&a_1
	\end{bmatrix} ~\text{and}~ T_n{(2,1,1)} =  \begin{bmatrix}
		2& 1& 0&  \cdots & 0& 0& 0\\
		1& 2& 1&  & 0& 0& 0 \\
		0& 1& 2&  & 0& 0& 0\\
		\vdots&& \ddots&\ddots&\ddots&  \\
		0& 	&&&2&1& 0\\
		0&\cdots&&	&1&2& 1\\
		0&\cdots&&	&0&1&2
	\end{bmatrix}.
\end{equation*}

Suppose that	
$A=\cir(\mathbf{x^{\prime}})$ and 
$B=\cir(\mathbf{y^{\prime}})$ where $\mathbf{x,y} \in \mathbb{R}^n$. 
Then
		\begin{equation}\label{circulant properties}
					AB= BA, \quad AB=\cir(\mathbf{x^{\prime}}B) \quad 
					\text{and}\quad 
		\cir (a \mathbf{x^{\prime}}+ b 
			\mathbf{y^{\prime}}) =aA+bB.
		\end{equation}
The above interesting 
properties of 
the circulant matrix will be used frequently in this paper.
 For more 
 results, we refer to the books \cite{Horn,Zhang}.

\section{The Distance Matrix of a Helm Graph and its Inertia}\label{helm graph 
sec 3}
We first define the  
distance matrix $D(H_n)$ of a helm graph $H_n$. Then we show that $D(H_n)$ is 
singular when $n$ is odd. Next, we derive the inertia of $D(H_n)$ after 
determining its rank.

We first recall the definition of a wheel graph. For $n\geq4$, the notation 
$C_{n-1}$ denotes the cycle of length $n-1$ and the vertices in $C_{n-1}$ are 
labelled as 
$v_1, v_2, \ldots, v_{n-1}$.
 The wheel graph $W_n$ on $n$ vertices is a graph containing the cycle 
 $C_{n-1}$ 
 and  a 
 vertex, say $v_0$, not in the 
 cycle $C_{n-1}$ which is adjacent to every vertex $v_i$ in the 
 cycle $C_{n-1}$. This paper is concerned with the helm graph which we define 
 next. The helm graph on $2n-1$ vertices, 
denoted by 
$H_n$, 
is a supergraph of $W_n$ which is obtained from $W_n$ by attaching a 
pendant vertex $u_i$ to the vertex $v_i$  lying on the outer cycle for 
all 
$i=1,2,\ldots, n-1$. The helm graph $H_7$ on $13$ vertices is given in Figure
\ref{helm graph}.
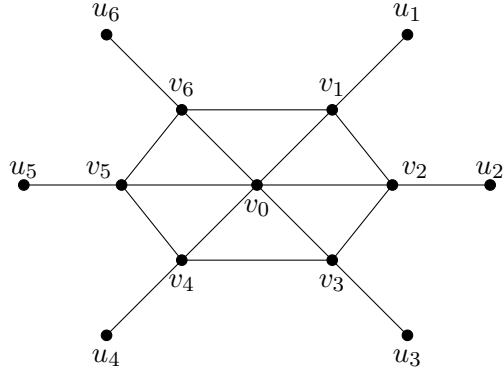
\begin{figure}[h!]
	\begin{center}
		\begin{tabular}{c}
		\begin{tikzpicture}[scale=1]
			\draw[fill=black] (0,0) circle (2pt);\node at (0,-0.3) 
			{$ 
				v_0 $};
			\draw[fill=black] (1.8,0) circle (2pt);
			\draw[fill=black] (-1.8,0) circle (2pt);
			\draw[fill=black] (3.1,0) circle (2pt);
			\draw[fill=black] (-3.1,0) circle (2pt);
			\draw[fill=black] (1,1) circle (2pt);
			\draw[fill=black] (1,-1) circle (2pt);
			\draw[fill=black] (-1,1) circle (2pt);
			\draw[fill=black] (-1,-1) circle (2pt);
			\draw[thin] 
			(0,0)--(1.8,0)--(1,1)--(-1,1)--(-1,1)--(-1.8,0)--(-1,-1)--(1,-1)--(1.8,0);
			
			\draw[thin] (-1.8,0)--(-3.1,0);
			\draw[thin] (1.8,0)--(3.1,0);
			
			\draw[fill=black] (-2,2) circle (2pt);
			\draw[fill=black] (2,2) circle (2pt);
			\draw[thin] (1,1)--(2,2);
			\draw[thin] (-1,1)--(-2,2);

			\draw[fill=black] (-2,-2) circle (2pt);
			\draw[fill=black] (2,-2) circle (2pt);
			\draw[thin] (1,-1)--(2,-2);
			\draw[thin] (-1,-1)--(-2,-2);
			
			\draw[thin] (0,0)--(-1.8,0);
			\draw[thin] (0,0)--(-1,1);
			\draw[thin] (0,0)--(-1,-1);
			\draw[thin] (0,0)--(1,1);
			\draw[thin] (0,0)--(1,-1);
			\node at (1,1.3) {$ v_1 $};
			\node at (2,2.3) {$ u_1 $};
			\node at (1,-1.3) {$ v_3 $};
			\node at (2,-2.3) {$ u_3 $};
			\node at (-1,-1.3) {$ v_4 $};
			\node at (-2,-2.3) {$ u_4 $};
			\node at (-1,1.3) {$ v_6 $};
			\node at (2.1,0.2) {$ v_2 $};
			\node at (-2.1,0.2) {$ v_5 $};
			\node at (-2,2.3) {$ u_6 $};
			\node at (3.1,0.2) {$ u_2 $};
			\node at (-3.1,0.2) {$ u_5 $};
		\end{tikzpicture}
		\end{tabular}
	\end{center}
	\caption{Helm graph $H_7$ on 13 vertices}\label{helm graph}
\end{figure}

\noindent Let $\mathbf{u}=(0,1,2,2,\ldots,2,1)^{\prime} \in \mathbb{R}^{n-1}$. 
Then the distance matrix $D(H_n)$ of the helm graph $H_n$ is given by
\begin{equation*}\label{D(H_n)}
	D(H_n)=\begin{bmatrix}
		0 & \mathbf{e}^{\prime} & 
		2\mathbf{e}^{\prime}\\[5pt]
		\mathbf{e} & \widetilde{D} & 
		\widetilde{D}+J_{n-1}\\[5pt]
		2\mathbf{e} &~ \widetilde{D}+J_{n-1} & 
		\widetilde{D}+2(J_{n-1}-I_{n-1})
	\end{bmatrix},
\end{equation*}
where $\widetilde{D}=\cir(\mathbf{u^{\prime}})$, see \cite{helm graph,EDM}.
We fix the symbol $\mathbf{s}$ to denote the vector
$(2,1,0,\ldots,0,1)^{\prime}$ in 
$\mathbb{R}^{n-1}$. By defining 
$S=\cir(\mathbf{s}^{\prime})$,
the matrix $D(H_n)$ can be re-written as
$D(H_n)=D_a+D_b$, where 
\begin{equation*}\label{D as sum of two matrices}
	D_a=\begin{bmatrix}
		0 & \mathbf{e}^{\prime} & 
		2\mathbf{e}^{\prime}\\[5pt]
		\mathbf{e} & 2J_{n-1} & 3J_{n-1}\\[5pt]
		2\mathbf{e} & 3J_{n-1} & 4J_{n-1}
	\end{bmatrix} \text{ and } 
	D_b=\begin{bmatrix}
		0 & \mathbf{0}^{\prime} & \mathbf{0}^{\prime}\\[5pt]
		\mathbf{0}  & -S & -S\\[5pt]
		\mathbf{0}  & -S & -(S+2I_{n-1}) 
		\end{bmatrix}.
	\end{equation*}
Based on this observation, it is easy to show that $D(H_n)$ is singular if $n$ 
is odd, which we prove in Theorem \ref{D is singular}.
\subsection{The Inertia of $D(H_n)$}
It has been shown that $\det(D(H_n))=3(n-1)2^{n-1}$ when $n$ is 
even (\cite{helm graph}, Theorem $2$). From numerical computations, it has 
been observed in \cite{helm graph} that $D(H_n)$ is singular if $n$  is odd. In 
the following, we give a proof of this result.
 Precisely, we show that 
$\det(H_n)=0$ if $n$ 
is odd. 
\begin{theorem}\label{D is singular}
	Let $n \geq 5$. If $n$  is an odd integer, then $D(H_n)$ is singular.
\end{theorem}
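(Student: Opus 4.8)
The plan is to prove singularity by exhibiting an explicit nonzero vector in the null space of $D(H_n)$, which simultaneously shows $\det(D(H_n))=0$. The natural starting point is the decomposition $D(H_n)=D_a+D_b$ together with the identity $\widetilde{D}=2J_{n-1}-S$, which holds because $\mathbf{u}^{\prime}+\mathbf{s}^{\prime}=2\mathbf{e}^{\prime}$ and hence $\cir(\mathbf{u}^{\prime})+\cir(\mathbf{s}^{\prime})=2J_{n-1}$ by (\ref{circulant properties}). Rewriting every block of $D(H_n)$ purely in terms of $J_{n-1}$, $S$, and $I_{n-1}$ reduces the entire computation to understanding how $J_{n-1}$ and $S$ act on a single well-chosen vector.

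Since $n$ is odd, $n-1$ is even, so the alternating vector $\mathbf{p}=(1,-1,1,-1,\ldots,1,-1)^{\prime}\in\mathbb{R}^{n-1}$ is well defined and has zero coordinate sum, giving $\mathbf{e}^{\prime}\mathbf{p}=0$ and $J_{n-1}\mathbf{p}=\mathbf{0}$. The key step is to check that $S\mathbf{p}=\mathbf{0}$. One clean route is via circulant eigenvalues: the eigenvalues of $S=\cir(\mathbf{s}^{\prime})$ are $\lambda_j=2+2\cos\!\big(\tfrac{2\pi j}{n-1}\big)$ for $j=0,1,\ldots,n-2$, and $\lambda_j=0$ precisely when $j=(n-1)/2$ (which exists exactly because $n-1$ is even), with associated eigenvector $\mathbf{p}$. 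Equivalently, and more elementarily, the $i$-th entry of $S\mathbf{p}$ equals $2p_i+p_{i-1}+p_{i+1}$ (indices modulo $n-1$), and the alternation $p_{i\pm1}=-p_i$ makes this vanish for every $i$, the wraparound being consistent precisely because $n-1$ is even. Combining $J_{n-1}\mathbf{p}=\mathbf{0}$ with $S\mathbf{p}=\mathbf{0}$ then yields $\widetilde{D}\mathbf{p}=(2J_{n-1}-S)\mathbf{p}=\mathbf{0}$ at once.

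With these facts in hand, I would set $\mathbf{x}=(0,\mathbf{p}^{\prime},\mathbf{0}^{\prime})^{\prime}\in\mathbb{R}^{2n-1}$ and verify $D(H_n)\mathbf{x}=\mathbf{0}$ block by block: the top entry is $\mathbf{e}^{\prime}\mathbf{p}=0$; the middle block is $\widetilde{D}\mathbf{p}=\mathbf{0}$; and the bottom block is $(\widetilde{D}+J_{n-1})\mathbf{p}=\widetilde{D}\mathbf{p}+J_{n-1}\mathbf{p}=\mathbf{0}$. Since $\mathbf{x}\neq\mathbf{0}$, the matrix $D(H_n)$ is singular, which is the claim.

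The only real obstacle is locating the right null vector, which amounts to recognizing that the parity of $n$ governs the singularity of the cyclic tridiagonal circulant $S$; once $\mathbf{p}$ is identified, everything reduces to the two one-line computations $J_{n-1}\mathbf{p}=\mathbf{0}$ and $S\mathbf{p}=\mathbf{0}$. A mild subtlety worth flagging is the bottom-right block, which carries the extra $-2I_{n-1}$ term in $\widetilde{D}+2(J_{n-1}-I_{n-1})$; placing the nonzero data of $\mathbf{x}$ in the cycle coordinates (the middle block) and zero in the pendant coordinates (the bottom block) is exactly what keeps this term from ever acting on $\mathbf{p}$, making the verification painless. The hypothesis $n\geq5$ only ensures $n-1\geq4$ so that $\mathbf{p}$ and the cyclic structure are genuinely nontrivial.
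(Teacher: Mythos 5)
Your proposal is correct and follows essentially the same route as the paper: both exhibit the null vector $(0,\mathbf{p}^{\prime},\mathbf{0}^{\prime})^{\prime}$ built from the alternating vector, and both reduce the verification to the two facts $J_{n-1}\mathbf{p}=\mathbf{0}$ and $S\mathbf{p}=\mathbf{0}$ via the decomposition of $D(H_n)$ into its $J$-part and $S$-part. The only difference is cosmetic: you justify $S\mathbf{p}=\mathbf{0}$ explicitly (by circulant eigenvalues or the entrywise cancellation), where the paper simply asserts it is easy to see.
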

\begin{proof}
	 Define $\mathbf{v}=(1,-1,1,-1,\ldots,1,-1)^{\prime} \in 
	\mathbb{R}^{n-1}$ and $\mathbf{p}_0=(0,\mathbf{v}^{\prime}, 
	\mathbf{0}^{\prime})^{\prime} \in 
	\mathbb{R}^{2n-1}$. Then
	\begin{equation*}
		D(H_n)\mathbf{p}_0=\begin{bmatrix}
			0 & \mathbf{e}^{\prime} & 2\mathbf{e}^{\prime} \\[5pt]
			\mathbf{e} & 2J & 3J\\[5pt]
			2 \mathbf{e} & 3J & 4J
		\end{bmatrix}\begin{bmatrix}
			0 \\[5pt] \mathbf{v} \\[5pt] \mathbf{0}
		\end{bmatrix}+\begin{bmatrix}
			0 & \mathbf{0}^{\prime} & \mathbf{0}^{\prime} \\[5pt]
			\mathbf{0} & -S & -S\\[5pt]
			\mathbf{0} & -S & -(S+2I)
		\end{bmatrix}\begin{bmatrix}
		0 \\[5pt] \mathbf{v} \\[5pt] \mathbf{0}
		\end{bmatrix}
		=\begin{bmatrix}
			\mathbf{e}^{\prime}\mathbf{v} \\[5pt] 2J\mathbf{v} \\[5pt] 
			3J\mathbf{v}
		\end{bmatrix}
		+\begin{bmatrix}
			0 \\[5pt] -S\mathbf{v} \\[5pt] -S\mathbf{v}
		\end{bmatrix}.
	\end{equation*}
	Since $n-1$ is even, we have $\mathbf{e}^{\prime}\mathbf{v}=0$ which implies
	$J\mathbf{v}=\mathbf{0}$. It is easy to see that
	$S\mathbf{v}=\mathbf{0}$. Therefore,
	$D(H_n)\mathbf{p}_0=\mathbf{0}$ and hence $D(H_n)$ is singular. 
\end{proof}

Recall that the \textit{inertia} of a real symmetric 
matrix $M$ of order $n$, denoted by 
$\In(M)$, is the ordered triple $(i_+, i_{-}, i_0)$, where $i_+$, 
$i_-$  and  $i_0$ respectively denote the number of positive, 
negative, and 
zero eigenvalues of $M$ including the multiplicities. It is well known that 
$\ra(M)=i_++i_{-}$. 

It is shown in \cite{Graham and 
	Pollak} that the inertia of the distance 
matrix of any tree with $n\geq 2$ vertices is 
$(1,n-1,0)$. The inertias of the distance matrices of 
 wheel graphs and fan graphs are 
studied, see \cite{ Distance spectra survey}. The objective 
of this 
section is to find the inertia of the distance matrix of 
the helm graph.

Next, we find the rank of $D(H_n)$ which will be used to compute the 
$\In(D(H_n))$.
\begin{theorem}\label{rank of H}
Let $n\geq 4$ be an integer. Then
$\ra(D(H_n))=\begin{cases}
	2n-1 & \text{if $n$ is even},\\
	2n-2 & \text{if $n$ is odd}.
\end{cases}$
\end{theorem}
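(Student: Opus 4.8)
The plan is to treat the two parities separately, the even case being immediate and the odd case requiring a null-space computation. When $n$ is even, $D(H_n)$ is nonsingular since $\det(D(H_n))=3(n-1)2^{n-1}\neq 0$ by the result cited above, so $\ra(D(H_n))=2n-1$ at once. When $n$ is odd, Theorem \ref{D is singular} exhibits a nonzero vector $\mathbf{p}_0=(0,\mathbf{v}^{\prime},\mathbf{0}^{\prime})^{\prime}$ with $D(H_n)\mathbf{p}_0=\mathbf{0}$, so $\ra(D(H_n))\le 2n-2$; it therefore suffices to prove that the null space of $D(H_n)$ is exactly one-dimensional, that is, $N(D(H_n))=\s\{\mathbf{p}_0\}$.

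To do this I would first record the structural facts about $S=\cir(\mathbf{s}^{\prime})$ that drive the argument. Since $S$ is circulant with row sum $2+1+1=4$, we have $S\mathbf{e}=4\mathbf{e}$ and $\mathbf{e}^{\prime}S=4\mathbf{e}^{\prime}$; and since $S=2I_{n-1}+A$ where $A$ is the adjacency matrix of the cycle $C_{n-1}$, its eigenvalues are $2+2\cos\big(2\pi k/(n-1)\big)$ for $0\le k\le n-2$. When $n$ is odd the index $k=(n-1)/2$ is an integer and is the only one producing the eigenvalue $0$, so $S$ is singular with a one-dimensional null space; the corresponding eigenvector is the alternating vector $\mathbf{v}$, whence $N(S)=\s\{\mathbf{v}\}$ and $\mathbf{e}^{\prime}\mathbf{v}=0$.

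The core of the proof is then a direct reduction. Writing a general null vector as $\mathbf{x}=(x_0,\mathbf{a}^{\prime},\mathbf{b}^{\prime})^{\prime}$ and using $D(H_n)=D_a+D_b$, the three block rows of $D(H_n)\mathbf{x}=\mathbf{0}$ become, with $\alpha:=\mathbf{e}^{\prime}\mathbf{a}$ and $\beta:=\mathbf{e}^{\prime}\mathbf{b}$, the scalar equation $\alpha+2\beta=0$ together with $(x_0+2\alpha+3\beta)\mathbf{e}=S(\mathbf{a}+\mathbf{b})$ and $(2x_0+3\alpha+4\beta)\mathbf{e}=S(\mathbf{a}+\mathbf{b})+2\mathbf{b}$, where I have used $J\mathbf{a}=\alpha\mathbf{e}$ and $J\mathbf{b}=\beta\mathbf{e}$. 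Subtracting the first of these vector equations from the second forces $\mathbf{b}=t\mathbf{e}$ for the scalar $t=\tfrac{1}{2}(x_0+\alpha+\beta)$, hence $\beta=(n-1)t$; feeding this back and using $S\mathbf{e}=4\mathbf{e}$ shows that $S\mathbf{a}$ is a scalar multiple of $\mathbf{e}$. Projecting that relation onto $\mathbf{e}$ via $\mathbf{e}^{\prime}S=4\mathbf{e}^{\prime}$, together with $\alpha+2\beta=0$, pins down all the scalars in terms of $t$, and substituting back into the defining relation $t=\tfrac{1}{2}(x_0+\alpha+\beta)$ yields $3t=0$. Thus $t=0$, forcing $x_0=0$, $\mathbf{b}=\mathbf{0}$ and $S\mathbf{a}=\mathbf{0}$, so $\mathbf{a}\in N(S)=\s\{\mathbf{v}\}$ and $\mathbf{x}\in\s\{\mathbf{p}_0\}$.

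The step I expect to be the crux is the bookkeeping that collapses the scalar relations to $3t=0$: each vector equation must be correctly resolved into its component along $\mathbf{e}$ and its component in $\s\{\mathbf{v}\}$, and it is exactly the consistency between the definition of $t$ and the value of $x_0+\alpha+\beta$ forced by the remaining equations that eliminates the one-parameter freedom one might naively expect from $N(S)$. Establishing that $N(S)$ is genuinely one-dimensional, and not merely nontrivial, is the other point requiring care, and I would handle it through the circulant eigenvalue description above. Combining the two cases then gives the stated rank formula.
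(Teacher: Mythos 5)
Your proposal is correct and takes essentially the same route as the paper: the even case follows from the determinant formula, and the odd case proceeds by block-eliminating a putative null vector until the system collapses to $S\mathbf{a}=\mathbf{0}$, with your $3t=0$ bookkeeping being just a reparametrization of the paper's comparison of its two expressions for $\gamma$ in (\ref{Dz five}) and (\ref{Dz six}). The only minor divergence is how $\dim N(S)=1$ is established: you use the circulant/cycle eigenvalues $2+2\cos\bigl(2\pi k/(n-1)\bigr)$, whereas the paper bounds $\ra(S)\geq n-2$ via $\det(T_{n-2}(2,1,1))=n-1$; both are valid, and the paper itself records your eigenvalue description later in Remark \ref{eigenvalues of S}.
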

\begin{proof}
If $n$ is even, then $\det(D(H_n))=3(n-1)2^{n-1}\neq 0$ by Theorem 2 in 
\cite{helm graph}. Hence $\ra(D(H_n))=2n-1$. Assume that $n$ is odd. We claim 
that 
$\ra(D(H_n))=2n-2$. To prove the claim,  it is enough to show 
$N(D(H_n))=\s\{\mathbf{z_0}\}$ for 
some non-zero $\mathbf{z_0} \in \mathbb{R}^{2n-1}$. Let 
$\mathbf{z}=(\gamma,\mathbf{p}^{\prime},\mathbf{q}^{\prime})^{\prime} \in 
\mathbb{R}^{2n-1}$ where 
$\mathbf{p},\mathbf{q} \in \mathbb{R}^{n-1}$ and $\gamma\in \mathbb{R}$. 
Suppose 
$D(H_n)\mathbf{z}=\mathbf{0}$. Then we have the following system of equations:
\begin{align}
	\mathbf{e^{\prime}}\mathbf{p}+2\mathbf{e^{\prime}}\mathbf{q}&=0.
	\label{Dz first}\\
	\gamma\mathbf{e}+2J\mathbf{p}-S\mathbf{p}+3J\mathbf{q}-S\mathbf{q}&=\mathbf{0}.
	 \label{Dz second}\\
	2\gamma\mathbf{e}+3J\mathbf{p}-S\mathbf{p}+4J\mathbf{q}-S\mathbf{q}&=2\mathbf{q}.
	 \label{Dz third}
\end{align}
Subtracting (\ref{Dz second}) from (\ref{Dz third}), we obtain 
$\gamma\mathbf{e}+J\mathbf{p}+J\mathbf{q}=2\mathbf{q}$ which can be written as 
\begin{equation}\label{Dz four}
(\gamma+\mathbf{e}^{\prime}\mathbf{p}+\mathbf{e}^{\prime}\mathbf{q})\mathbf{e}=
2\mathbf{q}.
\end{equation}
From (\ref{Dz first}) and (\ref{Dz four}), we get 
$(\gamma-\mathbf{e}^{\prime}\mathbf{q})\mathbf{e}=2\mathbf{q}$. This implies 
$(\gamma-\mathbf{e}^{\prime}\mathbf{q})\mathbf{e}^{\prime}\mathbf{e}=
2\mathbf{e}^{\prime}\mathbf{q}$ and hence 
\begin{equation}\label{Dz five}
\gamma=\frac{n+1}{n-1}\mathbf{e}^{\prime}\mathbf{q}.
\end{equation}
Now we obtain  another expression for $\gamma$ by premultiplying (\ref{Dz 
second}) by $\mathbf{e}^{\prime}$. Using the fact 
$\mathbf{e}^{\prime}S=4\mathbf{e}^{\prime}$, we get 
$\gamma(n-1)+2(n-1)\mathbf{e}^{\prime}\mathbf{p}-4\mathbf{e}^{\prime}\mathbf{p}
+3(n-1)\mathbf{e}^{\prime}\mathbf{q}-4\mathbf{e}^{\prime}\mathbf{q}=0$. By 
(\ref{Dz first}), the above equation reduces to
\begin{equation}\label{Dz six}
	\gamma=\frac{n-5}{n-1}\mathbf{e}^{\prime}\mathbf{q}.
\end{equation}
Comparing (\ref{Dz five}) and (\ref{Dz six}) gives 
$\mathbf{e}^{\prime}\mathbf{q}=0$ 
which implies $\gamma=0$. It
follows 
from 
(\ref{Dz first}) that  $\mathbf{e}^{\prime}\mathbf{p}=0$  and hence by (\ref{Dz 
four}), $\mathbf{q}=\mathbf{0}$. Thus, 
$\mathbf{z}=(0,\mathbf{p}^{\prime},\mathbf{0}^{\prime})^{\prime}$ and the 
system $D(H_n)\mathbf{z}=\mathbf{0}$ reduces to $S\mathbf{p}=\mathbf{0}$. 
Note 
that $S$ can be written as 
\begin{equation*}\label{S}
S=\begin{bmatrix}
		2 & \mathbf{e_1}^{\prime}+ \mathbf{e_{n-2}}^{\prime}\\[5pt]
		\mathbf{e_1}+\mathbf{e_{n-2}}  & T_{n-2}(2,1,1)
	\end{bmatrix}.
\end{equation*}
By Theorem $5.5$ in \cite{Zhang}, $\det(T_{n-2}(2,1,1))=n-1$ which gives 
$\ra(S) \geq n-2$. Note that the order of $S$ is $n-1$ and 
$S\mathbf{v}=\mathbf{0}$ where 
$\mathbf{v}=(1,-1,1,-1,\ldots,1,-1)^{\prime} \in \mathbb{R}^{n-1}$. Hence 
$\ra(S)= n-2$ 
and 
$N(S)=\s\{\mathbf{v}\}$. This implies $N(D(H_n))=\s\{\mathbf{z_0}\}$ where 
$\mathbf{z_0}=(0,\mathbf{v}^{\prime},\mathbf{0}^{\prime})^{\prime}$. Hence the 
proof.
\end{proof}

\begin{theorem}\label{inertia}
	Let $n\geq 4$. The inertia of $D(H_n)$ is 
	$	\In D(H_n)=
	\begin{cases} (1,2n-2,0) & \text{ if $n$ is even}, \\
		(1,2n-3,1) & \text{ if $n$ is odd}.
	\end{cases}$
\end{theorem}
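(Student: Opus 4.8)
The plan is to reduce the whole statement to the single claim that $D(H_n)$ has exactly one positive eigenvalue, and then read off everything else from the rank obtained in Theorem~\ref{rank of H}. Writing $\In(D(H_n))=(i_+,i_-,i_0)$ and recalling that $D(H_n)$ has order $2n-1$, we have $i_0=(2n-1)-\ra(D(H_n))$, which is $0$ for $n$ even and $1$ for $n$ odd, while $i_-=\ra(D(H_n))-i_+$. Hence, once I show $i_+=1$, the two claimed inertias $(1,2n-2,0)$ and $(1,2n-3,1)$ drop out at once.

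For the lower bound, $D(H_n)$ has zero diagonal, so its trace and hence the sum of its eigenvalues is $0$; as $D(H_n)\neq O$, it must possess at least one positive eigenvalue, giving $i_+\geq 1$. For the upper bound I would use that, by \cite{EDM}, $D(H_n)$ is a Euclidean distance matrix. By Schoenberg's characterization this is equivalent to $\mathbf{x}^{\prime}D(H_n)\mathbf{x}\leq 0$ for every $\mathbf{x}\in\mathbb{R}^{2n-1}$ with $\mathbf{e}^{\prime}\mathbf{x}=0$; that is, $D(H_n)$ is negative semidefinite on the hyperplane $\mathbf{e}^{\perp}$, which has dimension $2n-2$. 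Any subspace on which $D(H_n)$ is positive definite therefore meets $\mathbf{e}^{\perp}$ only in $\mathbf{0}$, so its dimension is at most $(2n-1)-(2n-2)=1$, i.e. $i_+\leq 1$. Combining the two bounds yields $i_+=1$, and the theorem follows.

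Should a proof internal to this paper be preferred, I would instead compute $\In(D(H_n))$ directly by congruence from the block form $D(H_n)=D_a+D_b$. Subtracting the second block row and column from the third is a congruence, so by Sylvester's law it preserves the inertia, and it replaces the $(3,3)$ block by $-2I_{n-1}$. The Schur complement with respect to this negative definite block then contributes inertia $(0,n-1,0)$ and reduces the problem to the $n\times n$ matrix $N=\begin{bmatrix}\frac{n-1}{2}&\frac{n+1}{2}\mathbf{e}^{\prime}\\[4pt]\frac{n+1}{2}\mathbf{e}&\widetilde{D}+\frac{n-1}{2}J\end{bmatrix}$. Since $\widetilde{D}=\cir(\mathbf{u}^{\prime})$ is circulant, $\mathbb{R}^{n}$ decomposes as the orthogonal sum of the $N$-invariant subspaces $\{(0,\mathbf{x}):\mathbf{e}^{\prime}\mathbf{x}=0\}$, on which $N$ acts as $\widetilde{D}$ restricted to $\mathbf{e}^{\perp}$, and $\s\{(1,\mathbf{0}),(0,\mathbf{e})\}$, so the inertia of $N$ is the sum of the inertias on these two pieces.

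On $\mathbf{e}^{\perp}$ the eigenvalues of $\widetilde{D}$ are $-2-2\cos\!\big(\tfrac{2\pi k}{n-1}\big)$ for $k=1,\dots,n-2$; these are all negative, with a single exception that equals $0$ and occurs exactly when $n$ is odd (at $k=\tfrac{n-1}{2}$). On the two-dimensional piece the form is represented by a $2\times 2$ matrix $Q$ whose determinant works out to $-3(n-1)^2<0$, so $\In(Q)=(1,1,0)$. Adding $(0,n-1,0)$, the circulant contribution, and $(1,1,0)$ recovers $(1,2n-2,0)$ for $n$ even and $(1,2n-3,1)$ for $n$ odd. In the first route the only delicate point is the justification of $i_+\leq 1$, which is exactly where the Euclidean-distance-matrix structure is used; in the second route the care lies in the bookkeeping of the congruence and Schur step and in the sign analysis of the circulant eigenvalues.
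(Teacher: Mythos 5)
Your first route coincides with the paper's own proof: the paper likewise invokes \cite{EDM} to say that $D(H_n)$ is a Euclidean distance matrix and therefore has exactly one positive eigenvalue, and then combines $i_+=1$ with Theorem~\ref{rank of H} via $\ra=i_++i_-$ exactly as you do; the only difference is that where the paper cites Theorem~2 of \cite{EDM} for $i_+=1$, you unpack that citation into the standard argument (Schoenberg: negative semidefiniteness on $\mathbf{e}^{\perp}$ gives $i_+\leq 1$; zero trace and $D(H_n)\neq O$ give $i_+\geq 1$), which is correct. Your second route is genuinely different from the paper and, as far as I can check, fully correct: the congruence subtracting the second block row/column from the third indeed turns the $(3,3)$ block into $-2I_{n-1}$; Haynsworth inertia additivity then contributes $(0,n-1,0)$ and leaves the Schur complement $N$ exactly as you wrote it; the subspaces $\{(0,\mathbf{x}):\mathbf{e}^{\prime}\mathbf{x}=0\}$ and $\s\{(1,\mathbf{0}),(0,\mathbf{e})\}$ are orthogonal, complementary and $N$-invariant (invariance of the first uses $J\mathbf{x}=\mathbf{0}$ and that $\widetilde{D}\mathbf{e}$ is a multiple of $\mathbf{e}$); the eigenvalues $-2-2\cos\big(\tfrac{2\pi k}{n-1}\big)$, $k=1,\dots,n-2$, are all negative except for a single zero occurring precisely at $k=\tfrac{n-1}{2}$ when $n$ is odd; and the $2\times 2$ Gram matrix of $N$ on the second subspace has determinant $-3(n-1)^2<0$, hence inertia $(1,1,0)$. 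The totals $(0,n-1,0)+(0,n-2,0)+(1,1,0)$ and $(0,n-1,0)+(0,n-3,1)+(1,1,0)$ match the claim in both parities. The trade-off: the paper's route (your first) is short but rests on the external results of \cite{EDM}, while your congruence route is self-contained within the paper's own toolkit (circulant eigenvalues, Sylvester/Haynsworth), needs neither \cite{EDM} nor Theorem~\ref{rank of H} as input, and in fact re-derives the rank of $D(H_n)$ as a by-product, at the cost of more computation and bookkeeping.
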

\begin{proof}
	It is proved that  $D(H_n)$ is an Euclidean distance matrix 
	(\cite{EDM},{Theorem 
		14}). 
	Therefore, $D(H_n)$ has exactly one positive eigenvalue ({see 
		Theorem 2 in \cite{EDM}}), that is $n_+(D(H_n))=1$ for all $n\geq 4$. 
		By 	Theorem 
		\ref{rank of H}
		and using the fact that 
	$\ra(D(H_n))=	n_+ (D(H_n))+	n_- (D(H_n))$, we obtain the  desired 
	result.
\end{proof}
\section{Formulae for the inverse and the Moore-Penrose inverse of $D(H_n)$}
\label{Formula for the MP inverse of helm}
It is shown in \cite{helm 	graph} that if $n\geq 4$ is an even 
integer, then $D(H_n)$ is non-singular and a formula for 
$D(H_n)^{-1}$ has been obtained using the inverse of the distance matrix of the 
wheel graph $W_n$. Also, the formula is expressed as a constant 
multiple of a symmetric Laplacian-like matrix $\mathcal{L}$  plus a symmetric 
rank-one 
matrix defined by a vector 
$\mathbf{w}$. To be more precise,
\begin{equation}\label{Shivani formula}
	D(H_n)^{-1}=-\frac{1}{2}\mathcal{L}+
	\frac{4}{3(n-1)}\mathbf{w}\mathbf{w}^{\prime},
\end{equation}
where $\mathcal{L}$ is a symmetric positive semidefinite matrix of 
rank $2n-2$ 
and $\mathbf{w}=\frac{1}{4}(5-n,-\mathbf{e^{\prime}},
2\mathbf{e^{\prime}})^{\prime} \in \mathbb{R}^{2n-1}$ (\cite{helm 
graph}, Theorem $3$). In Section $\ref{short proof}$, we give  another proof of 
this 
result. We 
point out that our proof does not depend on $D(W_n)^{-1}$. 

Motivated by the inverse formula given in (\ref{Shivani formula}), we study an 
analogous formula for the Moore-Penrose inverse of $D(H_n)$ when $D(H_n)$ is
singular, see Section \ref{Moore-Penrose inverse}.
If $n$ is odd, we proved that $D(H_n)$ is singular (Theorem \ref{D is 
singular}). In this case, we establish a formula for $D(H_n)\ssymbol{2}$ by 
finding an equivalent formulations for the general matrix case and $D(H_n)$ 
which will be discussed in Section $\ref{charc}$.
\subsection{Characterizations  for the Moore-Penrose inverse of a general 
symmetric matrix and $D(H_n)$}\label{charc}
In this subsection, we derive a necessary and sufficient condition for a 
symmetric matrix $D$ to have its Moore-Penrose inverse of the form 
(\ref{Shivani formula}). The conditions are given in terms of system of linear 
equations and matrix equations, where the precise statement is given below. 
\begin{theorem}\label{eqiv formulation}
	Let $D$ be a symmetric matrix of order $n$ with $\mathbf{e} \in R(D)$  and 
	$\mathbf{w} \in \mathbb{R}^n$ such that 
	$\mathbf{e^{\prime}} \mathbf{w}=1$. Suppose that $L$ is a symmetric 
	Laplacian-like matrix  and $\alpha$ is a non-zero real number.
	Then $D^{\ssymbol{2}} =-\frac{1}{2} 
	L+\alpha \mathbf{w} \mathbf{w^{\prime}}$ if and only 
	if 
	$D\mathbf{w} =\frac{1}{\alpha} \mathbf{e}$ and $LD+2I=2\mathbf{w} 
	\mathbf{e^{\prime}}+\widetilde{V}$ for some symmetric matrix
	$\widetilde{V}$ satisfying 
	$D\widetilde{V}=O$ and $\widetilde{V}(-\frac{1}{2} L+\alpha \mathbf{w} 
	\mathbf{w^{\prime}}) =O$. 
\end{theorem}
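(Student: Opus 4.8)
The plan is to exploit the four defining Moore--Penrose equations together with the observation that, since $D$ is symmetric, the candidate matrix $X := -\tfrac{1}{2}L + \alpha\mathbf{w}\mathbf{w}^{\prime}$ is itself symmetric. Consequently the two adjoint conditions $(DX)^{\prime}=DX$ and $(XD)^{\prime}=XD$ collapse to the single requirement that $XD$ be symmetric (equivalently $DX=XD$). The whole argument then rests on one computation: using $D\mathbf{w}=\tfrac{1}{\alpha}\mathbf{e}$ and its transpose $\mathbf{w}^{\prime}D=\tfrac{1}{\alpha}\mathbf{e}^{\prime}$ (valid because $D$ is symmetric), one obtains
\begin{equation*}
	XD = -\tfrac{1}{2}LD + \alpha\mathbf{w}\mathbf{w}^{\prime}D = -\tfrac{1}{2}LD + \mathbf{w}\mathbf{e}^{\prime},
\end{equation*}
so that the matrix relation $LD+2I = 2\mathbf{w}\mathbf{e}^{\prime}+\widetilde{V}$ is \emph{equivalent} to $XD = I - \tfrac{1}{2}\widetilde{V}$. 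This identity is the hinge connecting the two conditions to the Moore--Penrose equations.

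For the sufficiency direction I would assume $D\mathbf{w}=\tfrac{1}{\alpha}\mathbf{e}$ and $LD+2I=2\mathbf{w}\mathbf{e}^{\prime}+\widetilde{V}$, rewrite the latter as $XD = I - \tfrac{1}{2}\widetilde{V}$, and verify the four axioms directly for $X$. Symmetry of $X$ is immediate; symmetry of $XD$ follows from the symmetry of $\widetilde{V}$, and since $X$ and $D$ are symmetric this forces $DX=(XD)^{\prime}=XD$, discharging both adjoint conditions at once. The remaining two equations reduce to the hypotheses on $\widetilde{V}$: we get $DXD = D(I-\tfrac{1}{2}\widetilde{V}) = D - \tfrac{1}{2}D\widetilde{V}=D$ because $D\widetilde{V}=O$, and $XDX = (I-\tfrac{1}{2}\widetilde{V})X = X - \tfrac{1}{2}\widetilde{V}X = X$ because $\widetilde{V}X=O$. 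Hence $X=D\ssymbol{2}$.

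For the necessity direction I would assume $D\ssymbol{2}=X$ and recover the two conditions. Since $L$ is Laplacian-like ($L\mathbf{e}=\mathbf{0}$) and $\mathbf{w}^{\prime}\mathbf{e}=1$, applying $X$ to $\mathbf{e}$ gives $D\ssymbol{2}\mathbf{e}=X\mathbf{e}=\alpha\mathbf{w}$. Premultiplying by $D$ and using $\mathbf{e}\in R(D)$, so that the orthogonal projection $DD\ssymbol{2}$ fixes $\mathbf{e}$, i.e. $DD\ssymbol{2}\mathbf{e}=\mathbf{e}$, yields $\mathbf{e}=\alpha D\mathbf{w}$, which is $D\mathbf{w}=\tfrac{1}{\alpha}\mathbf{e}$. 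With this in hand, the hinge identity gives $XD = I - \tfrac{1}{2}\widetilde{V}$ for $\widetilde{V}:=LD+2I-2\mathbf{w}\mathbf{e}^{\prime}$, and a short manipulation shows $\widetilde{V}=2(I-D\ssymbol{2}D)$. Because $D$ is symmetric, $D\ssymbol{2}D$ is the orthogonal projection onto $R(D)$, so $\widetilde{V}$ is twice the projection onto $N(D)$ and is therefore symmetric; the Moore--Penrose identities $DD\ssymbol{2}D=D$ and $D\ssymbol{2}DD\ssymbol{2}=D\ssymbol{2}$ then give $D\widetilde{V}=O$ and $\widetilde{V}X=O$ respectively.

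The step I expect to require the most care is the necessity direction, specifically pinning down $\widetilde{V}=2(I-D\ssymbol{2}D)$ and justifying $DD\ssymbol{2}\mathbf{e}=\mathbf{e}$; both hinge on the symmetry of $D$, which makes $D\ssymbol{2}D=DD\ssymbol{2}$ the orthogonal projector onto $R(D)=N(D)^{\perp}$, and on the standing hypothesis $\mathbf{e}\in R(D)$. Everything else is bookkeeping once the identity $XD = I - \tfrac{1}{2}\widetilde{V}$ has been isolated.
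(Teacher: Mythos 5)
Your proposal is correct and follows essentially the same route as the paper: both directions hinge on the identity $XD = I - \tfrac{1}{2}\widetilde{V}$, the identification $\widetilde{V} = 2(I - D\ssymbol{2}D)$ in the necessity direction (using $DD\ssymbol{2}\mathbf{e}=\mathbf{e}$ from $\mathbf{e}\in R(D)$), and the same verification of the four Moore--Penrose equations in the sufficiency direction. The only cosmetic difference is that you make explicit the observation that symmetry of $X$ and $D$ collapses the two adjoint conditions into one, which the paper handles with the same one-line argument.
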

\begin{proof}
	Assume that $D^{\ssymbol{2}} =-\frac{1}{2} L+\alpha \mathbf{w} 
	\mathbf{w^{\prime}}$ with $L\mathbf{e}=\mathbf{0}$.
	Premultiplying $D^{\ssymbol{2}}$ by $D$, we get $DD^{\ssymbol{2}}= 
	-\frac{1}{2} DL+\alpha D\mathbf{w} \mathbf{w^{\prime}}$. Since  $\mathbf{e} 
	\in R(D)$, we have $DD^{\ssymbol{2}} \mathbf{e}=\mathbf{e}$ (see 
	\cite{Moore inverse book}).
	Therefore, $ DD^{\ssymbol{2}} \mathbf{e}=-\frac{1}{2} DL\mathbf{e}+\alpha D 
	\mathbf{w} \mathbf{w^{\prime}} \mathbf{e} = \alpha D\mathbf{w}$ which gives 
	$ D\mathbf{w} =\frac{1}{\alpha} \mathbf{e}$.
	Now postmultiplying $D^{\ssymbol{2}}$ by $D$, we obtain $D^{\ssymbol{2}} 
	D=-\frac{1}{2} LD+\alpha \mathbf{w} \mathbf{w^{\prime}} D=-\frac{1}{2} 
	LD+\mathbf{w}\mathbf{e^{\prime}}$. This implies 
	$LD+2I=2\mathbf{w}\mathbf{e^{\prime}}+2(I-D^{\ssymbol{2}} D) = 
	2\mathbf{w}\mathbf{e^{\prime}}+\widetilde{V}$, where 
	$\widetilde{V}:=2(I-D^{\ssymbol{2}} D)$.
	Note that $\widetilde{V}$ is a symmetric matrix as $D^{\ssymbol{2}} D$ is 
	symmetric.
	Clearly $D\widetilde{V}=O$. We claim that $\widetilde{V} (-\frac{1}{2} 
	L+\alpha 
	\mathbf{w} \mathbf{w^{\prime}})=O$. We have 
	$\widetilde{V}D\ssymbol{2}=(2I-2D\ssymbol{2}D)D\ssymbol{2}=2D\ssymbol{2}-
	2D\ssymbol{2}DD\ssymbol{2}=O$.
 By the assumption on $D\ssymbol{2}$, the claim follows.
	Conversely assume that 
	\begin{equation}\label{identity}
		D\mathbf{w} 
	=\frac{1}{\alpha} \mathbf{e} ~\text{and}~ LD+2I=2\mathbf{w} 
	\mathbf{e^{\prime}}+\widetilde{V} ,
	\end{equation}
 where $L$ and $ \widetilde{V}$ are symmetric 
	matrices such that $L\mathbf{e}=\mathbf{0}$, 
	$D\widetilde{V}=O$ and $\widetilde{V} (-\frac{1}{2} L+\alpha \mathbf{w} 
	\mathbf{w^{\prime}}) =O$.
	Let $X=-\frac{1}{2} L+\alpha \mathbf{w} \mathbf{w^{\prime}}$. We claim that 
	$X$ is the Moore-Penrose inverse of $D$. First we show that $XD$ is 
	symmetric. We have
	$XD =-\frac{1}{2} LD+\alpha \mathbf{w} \mathbf{w^{\prime}}D$. Using 
(\ref{identity}), we can write
	$XD=I-\frac{1}{2}\widetilde{V}$. Therefore, $XD$ is symmetric. Also $DX$ is 
	a 
	symmetric matrix which follows from the fact that $X$, $D$ and $XD$ are 
	symmetric. Since $D\widetilde{V}=O$, it is easy to see that 
	$DXD=D(I-\frac{1}{2}\widetilde{V})=D$. The 
	assumption $\widetilde{V} X=O$ gives $XDX=X$. This completes the proof.	
\end{proof}
The next theorem gives the uniqueness of the Laplacian-like matrix $L$ and 
the vector $\mathbf{w}$, which satisfy the identities in (\ref{identity}).
\begin{theorem}\label{uniqueness of L}
	Let $D$ be a symmetric matrix of order $n$. If there exist a non-zero real 
	number $\alpha$ and $\mathbf{w} \in\mathbb{R}^n$ with 
	$\mathbf{e^{\prime}}\mathbf{w}=1$ such that $D^{\ssymbol{2}} =-\frac{1}{2} 
	L+\alpha \mathbf{w} \mathbf{w^{\prime}}$ for some Laplacian-like matrix 
	$L$, then the scalar $\alpha$, the vector $\mathbf{w}$ and the matrix $L$ 
	are unique.
\end{theorem}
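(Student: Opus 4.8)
The plan is to show that any admissible triple $(\alpha, \mathbf{w}, L)$ is completely forced by the matrix $D\ssymbol{2}$ itself, via its action on the distinguished vector $\mathbf{e}$. The structural feature to exploit is that $L$ is Laplacian-like, so $L\mathbf{e}=\mathbf{0}$; this means the Laplacian part of the decomposition is annihilated as soon as we act on $\mathbf{e}$, leaving only the contribution of the rank-one term. I would therefore suppose $D\ssymbol{2} = -\frac{1}{2}L + \alpha \mathbf{w}\mathbf{w}^{\prime}$ under the stated constraints and compute everything by postmultiplying and premultiplying by $\mathbf{e}$.

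Concretely, the first step is to postmultiply the decomposition by $\mathbf{e}$. Since $L\mathbf{e}=\mathbf{0}$ and $\mathbf{w}^{\prime}\mathbf{e}=\mathbf{e}^{\prime}\mathbf{w}=1$, this collapses to the single identity $D\ssymbol{2}\mathbf{e} = \alpha \mathbf{w}$. This is the engine of the argument: it expresses the product $\alpha\mathbf{w}$ purely in terms of $D\ssymbol{2}$ and $\mathbf{e}$, independently of which decomposition was chosen, so two admissible triples must share the same value of $\alpha\mathbf{w}$.

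Next I would isolate the scalar. Premultiplying $D\ssymbol{2}\mathbf{e}=\alpha\mathbf{w}$ by $\mathbf{e}^{\prime}$ and again using $\mathbf{e}^{\prime}\mathbf{w}=1$ yields $\alpha = \mathbf{e}^{\prime} D\ssymbol{2}\mathbf{e}$, a quantity depending only on $D\ssymbol{2}$; hence any two admissible scalars coincide, giving uniqueness of $\alpha$. Because $\alpha\neq 0$ by hypothesis, I may then divide to obtain $\mathbf{w} = \frac{1}{\alpha}D\ssymbol{2}\mathbf{e}$, so $\mathbf{w}$ is uniquely determined as well. Finally, rearranging the original decomposition gives $L = -2\big(D\ssymbol{2} - \alpha\mathbf{w}\mathbf{w}^{\prime}\big)$, which is now pinned down by the already-determined $\alpha$ and $\mathbf{w}$; thus $L$ is unique, completing the argument.

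I do not anticipate a serious obstacle: once $L\mathbf{e}=\mathbf{0}$ is invoked, the remainder is linear bookkeeping. The only point requiring care is the order of extraction — the scalar $\alpha$ must be isolated first as $\mathbf{e}^{\prime}D\ssymbol{2}\mathbf{e}$ before dividing to recover $\mathbf{w}$, and the hypothesis $\alpha\neq 0$ is precisely what legitimizes that division.
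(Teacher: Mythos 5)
Your proposal is correct and follows essentially the same route as the paper: both exploit $L\mathbf{e}=\mathbf{0}$ and $\mathbf{e}^{\prime}\mathbf{w}=1$ to extract $\alpha=\mathbf{e}^{\prime}D\ssymbol{2}\mathbf{e}$ first, then $\mathbf{w}=\frac{1}{\alpha}D\ssymbol{2}\mathbf{e}$, and finally $L$ by rearrangement. The only cosmetic difference is that the paper phrases this as a comparison of two hypothetical decompositions, whereas you express each quantity intrinsically in terms of $D\ssymbol{2}$ and $\mathbf{e}$; the underlying computation is identical.
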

\begin{proof}
	Suppose $D^{\ssymbol{2}} =-\frac{1}{2} L_0+\beta \mathbf{z} 
	\mathbf{z^{\prime}}$ for some non-zero $\beta \in \mathbb{R}$, 
	$\mathbf{z}\in \mathbb{R}^n$ with $\mathbf{e^{\prime}}\mathbf{z}=1$ and 
	$L_0\mathbf{e}=\mathbf{0}$.
	Then $\mathbf{e^{\prime}} D^{\ssymbol{2}} \mathbf{e} 
	=\mathbf{e^{\prime}}(\alpha \mathbf{w} \mathbf{w^{\prime}}) \mathbf{e} 
	=\mathbf{e^{\prime}}(\beta \mathbf{z} \mathbf{z^{\prime}}) \mathbf{e}$. 
	This implies $\alpha=\beta$ and hence $ D^{\ssymbol{2}} =-\frac{1}{2} 
	L+\alpha \mathbf{w} \mathbf{w^{\prime}} =-\frac{1}{2} L_0+\alpha \mathbf{z} 
	\mathbf{z^{\prime}}$. By finding $D^{\ssymbol{2}} \mathbf{e}$, we get $ 
	\mathbf{w}=\mathbf{z}$. Hence $L=L_0$ and hence the 
	proof.
\end{proof}
In the following remark, we mention certain facts about the system 
$D(H_n)\mathbf{w}=\frac{1}{\alpha} \mathbf{e}$  where $\alpha$ is a non-zero 
real number.
\begin{remark}\label{w from jacklick}
	It was proved in (\cite{EDM}, Theorem 15) that the system 	
	$D(H_n)\mathbf{w}=\frac{3(n-1)}{4} \mathbf{e}$ has a solution 
	$\mathbf{w_0}=\frac{1}{4}(5-n,-\mathbf{e^{\prime}}, 
2\mathbf{e^{\prime}})^{\prime} \in \mathbb{R}^{2n-1}$. Note that   
$\mathbf{e^{\prime}}\mathbf{w_0}=1$ and $\mathbf{e} \in R(D(H_n))$. In fact, it 
can be shown that if the system $D(H_n)\mathbf{w}=\frac{1}{\alpha} \mathbf{e}$ 
has a solution 	$\mathbf{w} \in \mathbb{R}^{2n-1}$ with 
$\mathbf{e^{\prime}}\mathbf{w}=1$ then 	$\alpha =\frac{4}{3(n-1)}$. We omit the 
proof as it is similar to the case of $D(H_n)\mathbf{z}=\mathbf{0}$, 
(see Theorem \ref{rank of H}). Moreover, if $D(H_n)$ is singular then the 
solution set of $D(H_n)\mathbf{w}=\frac{3(n-1)}{4} \mathbf{e}$ is 
$\mathbf{w_0}+N(D(H_n))=\left\{\frac{1}{4}(5-n,\beta\mathbf{v}^{\prime}
-\mathbf{e^{\prime}}, 2\mathbf{e^{\prime}})^{\prime}\in 	\mathbb{R}^{2n-1}: 
\beta \in \mathbb{R}\right\}$ where 
$\mathbf{v}=(1,-1,1,-1,\ldots,1,-1)^{\prime}\in \mathbb{R}^{n-1}$ (see Theorem 
\ref{rank of H}). Of course, the solution is unique if $D(H_n)$ is non-singular.
\end{remark}
The next result gives a  necessary 
condition on $\widetilde{V}$  which will be useful in obtaining the  
identities
 (\ref{identity}) in the case of $D(H_n)$. 
\begin{lemma}\label{Necessary condition on V}
		Let 	$S=\cir(\mathbf{s}^{\prime})$ where
	$\mathbf{s}=(2,1,0,\ldots,0,1)^{\prime} \in \mathbb{R}^{n-1}$ and let $D$ 
	be the distance matrix of $H_n$. If there exists a symmetric matrix 
	$\widetilde{V}$ such that $D\widetilde{V}=O$, then 
	$\widetilde{V}=\left[\begin{smallmatrix} 0 && \mathbf{0^{\prime}} && 
	\mathbf{0^{\prime}}\\
		\mathbf{0} && E && O\\
		\mathbf{0} && O && O
	\end{smallmatrix}\right]$ for some symmetric matrix $E$ of order $n-1$ 
	with $E	\mathbf{e}=\mathbf{0}$ and $SE=O$.
\end{lemma}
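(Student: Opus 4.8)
The plan is to turn the single matrix identity $D\widetilde{V}=O$ into its blocks and peel off the unknowns group by group, in the same spirit as the null-space computation in the proof of Theorem \ref{rank of H}. First I would partition $\widetilde{V}$ conformally with the $1+(n-1)+(n-1)$ block structure of $D$ and use its symmetry to write $\widetilde{V}=\left[\begin{smallmatrix} a & \mathbf{b}' & \mathbf{c}' \\ \mathbf{b} & E & F \\ \mathbf{c} & F' & G\end{smallmatrix}\right]$ with $E,G$ symmetric. Writing $D=D_a+D_b$ and using the circulant facts $S\mathbf{e}=4\mathbf{e}$, $\mathbf{e}'S=4\mathbf{e}'$, $S\mathbf{v}=\mathbf{0}$ together with $N(S)=\s\{\mathbf{v}\}$ (all from Theorem \ref{rank of H}), I would read off the nine block equations. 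The three ``top-row'' equations are $\mathbf{e}'\mathbf{b}+2\mathbf{e}'\mathbf{c}=0$, $\mathbf{e}'E+2\mathbf{e}'F'=\mathbf{0}'$ and $\mathbf{e}'F+2\mathbf{e}'G=\mathbf{0}'$, while the two first-column equations and the four lower $(n-1)\times(n-1)$ matrix equations carry the rest.

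Next I would dispose of the scalar $a$ and the off-diagonal vectors exactly as in Theorem \ref{rank of H}. Subtracting the two first-column block equations gives $(a-\mathbf{e}'\mathbf{c})\mathbf{e}=2\mathbf{c}$; premultiplying this by $\mathbf{e}'$, and separately premultiplying the first column equation by $\mathbf{e}'$ (using $\mathbf{e}'S=4\mathbf{e}'$), produces the two expressions $a=\frac{n+1}{n-1}\mathbf{e}'\mathbf{c}$ and $a=\frac{n-5}{n-1}\mathbf{e}'\mathbf{c}$, whose comparison forces $\mathbf{e}'\mathbf{c}=0$, hence $a=0$ and $\mathbf{c}=\mathbf{0}$. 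The first column equation then collapses to $S\mathbf{b}=\mathbf{0}$, so $\mathbf{b}=\beta\mathbf{v}$ for some scalar $\beta$.

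For the diagonal blocks I would first kill $G$: subtracting the $(2,2)$ block equation from the $(1,2)$ one and substituting $\mathbf{e}'F=-2\mathbf{e}'G$ yields $JG=-2G$; premultiplying by $\mathbf{e}'$ gives $(n+1)\mathbf{e}'G=\mathbf{0}'$, so $\mathbf{e}'G=\mathbf{0}'$, whence $JG=O$ and $G=O$. Then $\mathbf{e}'F=\mathbf{0}'$ and the $(1,2)$ equation reduces to $SF=O$, so $F\mathbf{e}=\mu\mathbf{v}$ for a scalar $\mu$. The step I expect to be the real obstacle is eliminating the two residual one-parameter freedoms $\beta$ and $\mu$: the vector $\mathbf{b}$ and the columns of $F$ are a priori only pinned to lie in $N(S)=\s\{\mathbf{v}\}$, so the individual block equations do not kill them. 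To remove them I would premultiply both the $(1,1)$ block equation and the difference of the $(2,1)$ and $(1,1)$ block equations by $\mathbf{e}'$; using $\mathbf{e}'E=-2\mu\mathbf{v}'$, $\mathbf{e}'F'=\mu\mathbf{v}'$ and $\mathbf{b}=\beta\mathbf{v}$, these collapse (after cancelling the common factor $\mathbf{v}'$) to the two scalar relations $(n-1)\beta=(n+1)\mu$ and $(n-1)\beta=(n-5)\mu$. Subtracting gives $6\mu=0$, so $\mu=0$ and then $\beta=0$; the nonzero gap $(n+1)-(n-5)=6$ is precisely what does the work, mirroring the role of $6\mathbf{e}'\mathbf{q}=0$ in Theorem \ref{rank of H}. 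Feeding $\beta=\mu=0$ back into the $(2,1)$-minus-$(1,1)$ equation then gives $F=O$ and $\mathbf{b}=\mathbf{0}$.

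Finally, with $a=0$, $\mathbf{b}=\mathbf{c}=\mathbf{0}$ and $F=G=O$, the only surviving block is the symmetric $E$. From $\mathbf{e}'E=-2\mu\mathbf{v}'=\mathbf{0}'$ and the symmetry of $E$ I obtain $E\mathbf{e}=\mathbf{0}$, and the $(1,1)$ block equation now reads $SE=O$, which is exactly the asserted form of $\widetilde{V}$. Throughout, the only inputs are the block expansion of $D=D_a+D_b$ and the spectral facts about $S$ already recorded in Theorem \ref{rank of H}, so no new machinery is needed beyond careful bookkeeping of the nine block equations.
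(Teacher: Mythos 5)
Your proposal is correct in substance, but it takes a genuinely longer road than the paper's proof, which rests on one observation you never quite exploit: $D\widetilde{V}=O$ says precisely that \emph{every column of $\widetilde{V}$ lies in $N(D)$}, and the proof of Theorem \ref{rank of H} already determined $N(D)$ completely --- every null vector of $D$ has the form $(0,c\,\mathbf{v}^{\prime},\mathbf{0}^{\prime})^{\prime}$, so its first coordinate and its last $n-1$ coordinates vanish. Applying this to all columns of $\widetilde{V}$ simultaneously forces the entire first row and the entire bottom $(n-1)$ rows of $\widetilde{V}$ to be zero, i.e.\ $\gamma=0$, $\mathbf{p}=\mathbf{0}$, $\mathbf{q}=\mathbf{0}$, $F^{\prime}=O$ (hence $F=O$) and $G=O$, with essentially no computation; what remains of $D\widetilde{V}=O$ is then $\mathbf{e}^{\prime}E=\mathbf{0}^{\prime}$ and $(2J-S)E=O$, giving $E\mathbf{e}=\mathbf{0}$ (by symmetry of $E$) and $SE=O$. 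Your route instead grinds through the nine block equations, and the residual one-parameter freedoms $\beta$ and $\mu$ you must then eliminate are an artifact of using the null-space information only ``columnwise in the middle block'': in the paper's viewpoint $\mathbf{b}$ and $F$ die outright because their entries also appear as first coordinates and bottom coordinates of other columns of the symmetric matrix $\widetilde{V}$. That said, your elimination does work --- I verified that the two scalar relations $(n-1)\beta=(n+1)\mu$ and $(n-1)\beta=(n-5)\mu$ follow from premultiplying the appropriate block equations by $\mathbf{e}^{\prime}$, and they force $\mu=\beta=0$ --- so your argument is a valid, if more laborious, alternative, and it has the small virtue of being self-contained at the level of block equations rather than quoting the null-space structure as a black box. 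Two points would need repair in a final write-up: first, your block labels are off in several places (e.g.\ ``subtracting the $(2,2)$ block equation from the $(1,2)$ one'' compares blocks of different sizes; the identity $JG=-2G$ actually comes from the $(3,3)$ and $(2,3)$ blocks, and the $\beta,\mu$ relations from the $(2,2)$ and $(3,2)$ blocks); second, your use of $N(S)=\s\{\mathbf{v}\}$ presumes $n$ odd, so you should note that for even $n$ the matrix $D$ (and $S$) is nonsingular and $\widetilde{V}=O$ trivially, a case the paper's formulation absorbs automatically.
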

\begin{proof}
	 Suppose
		 $ \widetilde{V}=\left[\begin{smallmatrix} \gamma && 
		 \mathbf{p^{\prime}} && 
		\mathbf{q^{\prime}}\\
		\mathbf{p} && E && F\\
		\mathbf{q} && F^{\prime} && G
	\end{smallmatrix}\right]$ where $\gamma \in \mathbb{R}$,  $	\mathbf{p},	
	\mathbf{q} \in 
\mathbb{R}^{n-1}$ and $E,F,G$ are  matrices of 
	order $n-1$. Assume that $D\widetilde{V}=O$. Then 
	$D\left[\begin{smallmatrix} 
		\gamma \\
		\mathbf{p}\\
		\mathbf{q} 
	\end{smallmatrix}\right] = \mathbf{0}$ gives that
 $\gamma=0$ and $	\mathbf{q}=\mathbf{0}$ (see the proof of Theorem \ref{rank 
 of H}). Similarly, we conclude that the first and the last $(n-1)$ 
 co-ordinates 
 of each column of $\widetilde{V}$ are zero. That is, $\mathbf{p}=\mathbf{0}$, 
 $F^{\prime}=O$ and 
 $G=O$. Now $D\widetilde{V}=O$ reduces to $\mathbf{e}^{\prime}E=O$ and 
 $(2J-S)E=O$. Thus, $E	\mathbf{e}=\mathbf{0}$ and $SE=O$. 
\end{proof}

We now derive a necessary and sufficient condition on the Laplacian-like matrix 
$L$ such that 
the  identities in (\ref{identity}) are satisfied. Recall
 that for a real  matrix $M$, 
$R(M^{\prime})=R(M\ssymbol{2})$ (see \cite{Moore inverse book}), which will be 
used 
in the 
next proof.

\begin{theorem}\label{necessary and sufficient conditions on L}
	Suppose that 	$S=\cir(\mathbf{s^{\prime}})$ where 
	$\mathbf{s}=(2,1,0,\ldots,0,1)^{\prime}\in \mathbb{R}^{n-1}$. Let 
  $D$ be the distance matrix of $H_n$. Then
	$D^{\ssymbol{2}} =-\frac{1}{2} L+\alpha \mathbf{w} \mathbf{w^{\prime}}$ for 
	some Laplacian-like matrix $L$, non-zero $\alpha \in \mathbb{R}$ and 
	$\mathbf{w}\in 
	\mathbb{R}^{2n-1}$ with $\mathbf{e^{\prime}}\mathbf{w}=1$ if and only if 
	$L=\left[\begin{smallmatrix} \frac{n-1}{2} & 
	-\frac{1}{2}\mathbf{e^{\prime}} & 
	\mathbf{0^{\prime}}\\[3pt]
		-\frac{1}{2}\mathbf{e} & A & B\\[3pt]
		\mathbf{0} & B^{\prime} & I_{n-1}\\
	\end{smallmatrix}\right]$ where $A$ and $B$ are symmetric matrices of order 
	$n-1$
	satisfying the following conditions:
\begin{multicols}{3}
	\begin{itemize}
		\item[(i)] $A\mathbf{e}=\frac{3}{2} \mathbf{e}$
		\item[(ii)] $B\mathbf{e}= \mathbf{-e}$
		\item[(iii)] $BS=-S$
		\item[(iv)] $(B+I)A=O$
		\item[(v)] $(B+I)B=O$
		\item[(vi)] $(A+B)S+2B=O$.
	\end{itemize}
\end{multicols} 
\end{theorem}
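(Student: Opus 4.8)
The plan is to run everything through Theorem \ref{eqiv formulation}: writing $X=-\frac12 L+\alpha\mathbf{w}\mathbf{w^{\prime}}$, the assertion $D\ssymbol{2}=X$ is equivalent to the pair of identities $D\mathbf{w}=\frac1\alpha\mathbf{e}$ and $LD+2I=2\mathbf{w}\mathbf{e^{\prime}}+\widetilde{V}$ with $\widetilde{V}$ symmetric, $D\widetilde{V}=O$ and $\widetilde{V}X=O$. Both directions will come from matching the $3\times3$ block partition of these identities against $D=D_a+D_b$. First I would fix the data that are forced on us: by Remark \ref{w from jacklick} the first identity forces $\alpha=\frac{4}{3(n-1)}$, and since $D\ssymbol{2}\mathbf{e}=(-\frac12 L+\alpha\mathbf{w}\mathbf{w^{\prime}})\mathbf{e}=\alpha\mathbf{w}$ (using $L\mathbf{e}=\mathbf{0}$ and $\mathbf{e^{\prime}}\mathbf{w}=1$), we get $\mathbf{w}=\frac1\alpha D\ssymbol{2}\mathbf{e}\in R(D\ssymbol{2})=R(D)$. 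Intersecting the solution set of Remark \ref{w from jacklick} with $R(D)=N(D)^{\perp}$ then singles out $\mathbf{w}=\mathbf{w}_0=\frac14(5-n,-\mathbf{e^{\prime}},2\mathbf{e^{\prime}})^{\prime}$ (for $n$ odd, orthogonality to $\mathbf{z_0}=(0,\mathbf{v^{\prime}},\mathbf{0^{\prime}})^{\prime}$ kills the free parameter $\beta$).

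For the ``only if'' direction I would partition $L=\left[\begin{smallmatrix} l_{00}&\mathbf{a^{\prime}}&\mathbf{b^{\prime}}\\ \mathbf{a}&A&B\\ \mathbf{b}&B^{\prime}&C\end{smallmatrix}\right]$ (with $A,C$ symmetric since $L$ is), invoke Lemma \ref{Necessary condition on V} to force $\widetilde{V}=\left[\begin{smallmatrix}0&\mathbf{0^{\prime}}&\mathbf{0^{\prime}}\\ \mathbf{0}&E&O\\ \mathbf{0}&O&O\end{smallmatrix}\right]$ with $E\mathbf{e}=\mathbf{0}$, $SE=O$, and then read off the nine blocks of $LD+2I=2\mathbf{w}_0\mathbf{e^{\prime}}+\widetilde{V}$. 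Blocks $(2,2),(2,0),(2,1)$ successively give $C=I_{n-1}$, $B^{\prime}\mathbf{e}=-\mathbf{e}$, and (after right-multiplying by $\mathbf{e}$) $\mathbf{b}=\mathbf{0}$ together with $B^{\prime}S=-S$; the difference $(0,2)-(0,1)$ with block $(0,0)$ gives $l_{00}=\frac{n-1}{2}$. Evaluating $E$ from block $(1,1)$ and imposing $E\mathbf{e}=\mathbf{0}$ yields $4\mathbf{a}+2\mathbf{e}=\mathbf{0}$, hence $\mathbf{a}=-\frac12\mathbf{e}$, $A\mathbf{e}=\frac32\mathbf{e}$, $B\mathbf{e}=-\mathbf{e}$, i.e. (i)--(ii). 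Substituting these into block $(1,2)$ collapses it to $(A+B)S+2B=O$, i.e. (vi), which identifies $E=2I-(A+B)S=2(I+B)$; finally $\widetilde{V}X=O$ reduces (using $E\mathbf{e}=E\mathbf{a}=E\mathbf{w}_1=\mathbf{0}$) to $EA=EB=O$, which are exactly (iv) and (v).

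The step I expect to be the main obstacle is upgrading what the blocks hand us, namely $B^{\prime}S=-S$ (equivalently $S(I+B)=O$), to the asserted condition (iii) $BS=-S$ \emph{and} the symmetry $B=B^{\prime}$; these coincide only once $B$ is symmetric, so symmetry of $B$ is the crux. I would resolve it from $S(I+B)=O$ by cases. When $n$ is even, $S$ is nonsingular (its circulant eigenvalues $2+2\cos\frac{2\pi j}{n-1}$ never vanish), forcing $B=-I$. When $n$ is odd, $N(S)=\s\{\mathbf{v}\}$ (Theorem \ref{rank of H}) forces every column of $I+B$ to be a multiple of $\mathbf{v}$, so $B=-I+\mathbf{v}\mathbf{c^{\prime}}$; the extra relation $B^{\prime}\mathbf{v}=\mathbf{0}$, which comes from $N(D\ssymbol{2})=N(D)=\s\{\mathbf{z_0}\}$, pins down $\mathbf{c}=\frac{1}{n-1}\mathbf{v}$, making $B=-I+\frac{1}{n-1}\mathbf{v}\mathbf{v^{\prime}}$ symmetric and giving $BS=-S$ at once (as $\mathbf{v^{\prime}}S=\mathbf{0^{\prime}}$). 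With $B=B^{\prime}$, the identity $B^{\prime}S=-S$ becomes (iii).

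For the converse I would reverse the computation. Assuming $L$ has the displayed form with $A,B$ symmetric obeying (i)--(vi), set $\alpha=\frac{4}{3(n-1)}$ and $\mathbf{w}=\mathbf{w}_0$ and verify the hypotheses of Theorem \ref{eqiv formulation}: $D\mathbf{w}_0=\frac{3(n-1)}{4}\mathbf{e}$ is Remark \ref{w from jacklick}; (i)--(ii) give $L\mathbf{e}=\mathbf{0}$ (so $L$ is Laplacian-like); a block computation of $LD+2I-2\mathbf{w}_0\mathbf{e^{\prime}}$ using (i)--(iii),(vi) produces $\widetilde{V}=\left[\begin{smallmatrix}0&\mathbf{0^{\prime}}&\mathbf{0^{\prime}}\\ \mathbf{0}&2(I+B)&O\\ \mathbf{0}&O&O\end{smallmatrix}\right]$, whose block $E=2(I+B)$ is symmetric with $E\mathbf{e}=\mathbf{0}$ and $SE=O$ by (ii)--(iii), whence $D\widetilde{V}=O$; and $\widetilde{V}X=O$ again reduces to $EA=EB=O$, guaranteed by (iv)--(v). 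Theorem \ref{uniqueness of L} then reconciles the two directions and confirms that the $\alpha$, $\mathbf{w}$ and $L$ so produced are the unique ones.
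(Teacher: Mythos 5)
Your proposal is correct, and its skeleton is the same as the paper's: reduce everything to Theorem \ref{eqiv formulation}, pin down $\alpha=\frac{4}{3(n-1)}$ and $\mathbf{w}=\mathbf{w}_0$ exactly as in Remark \ref{w from jacklick} (using $\mathbf{w}\in R(D)$ to kill the parameter $\beta$), force the shape of $\widetilde{V}$ via Lemma \ref{Necessary condition on V}, and then match blocks of $LD+2I=2\mathbf{w}_0\mathbf{e^{\prime}}+\widetilde{V}$; your block bookkeeping (including the compressed claim that $E\mathbf{e}=\mathbf{0}$ together with the $(1,0)$ block and the row sums of $L$ yields $\mathbf{a}=-\frac{1}{2}\mathbf{e}$, $A\mathbf{e}=\frac{3}{2}\mathbf{e}$, $B\mathbf{e}=-\mathbf{e}$) does check out, and your converse is the paper's verbatim. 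Where you genuinely diverge is the step you correctly identify as the crux: the symmetry of $B$ and condition (iii). The paper gets this uniformly in $n$ and with no extra machinery: comparing the middle-row blocks gives both $AS+BS=2I-X$ and $AS+BS+2B=O$, hence $2B=X-2I$, so $B$ inherits symmetry directly from the symmetry of $\widetilde{V}$ (and (vi) drops out of the same comparison); condition (iii) then follows from the bottom-row blocks. You instead extract $B^{\prime}S=-S$, i.e.\ $S(I+B)=O$, and argue structurally with a parity split: for $n$ even the invertibility of $S$ forces $B=-I$, while for $n$ odd you use $N(S)=\s\{\mathbf{v}\}$ plus the relation $B^{\prime}\mathbf{v}=\mathbf{0}$ (coming from $N(D\ssymbol{2})=N(D)$, hence $L\mathbf{z_0}=\mathbf{0}$) to pin $B=-I+\frac{1}{n-1}\mathbf{v}\mathbf{v^{\prime}}$. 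This costs more — the spectral facts about $S$, the null-space identification from Theorem \ref{rank of H}, and a case analysis the paper avoids — but it buys something the paper's proof does not: an explicit closed form for $B$, which is precisely the circulant $\cir(\mathbf{y^{\prime}})$ of Definition \ref{defn of L}, since $\cir(\mathbf{v^{\prime}})=\mathbf{v}\mathbf{v^{\prime}}$. In other words, your route makes Theorem \ref{necessary and sufficient conditions on L} not merely a characterization but a derivation of the matrix $B$ that the paper only introduces later "from numerical examples," which is a genuine structural bonus; the paper's route, by contrast, is shorter, parity-free, and keeps the theorem independent of the rank computations for $S$.
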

\begin{proof}
	Assume that
	$	D^{\ssymbol{2}} =-\frac{1}{2} L+\alpha \mathbf{w} \mathbf{w^{\prime}}  
		\text{ with } L\mathbf{e}=\mathbf{0} \text{ and } 
		\mathbf{e^{\prime}}\mathbf{w}=1$. 
Note that $D\ssymbol{2}$ is symmetric as $D$ is symmetric \cite{Moore inverse 
book}. This implies $L$ is 
symmetric. We first claim that 	
$\mathbf{w}=\frac{1}{4}(5-n,-\mathbf{e^{\prime}},
2\mathbf{e^{\prime}})^{\prime}$.
	By Theorem \ref{eqiv formulation} and Remark \ref{w from jacklick}, we have
	$D\mathbf{w}=\frac{1}{\alpha}\mathbf{e}$ where $\alpha =\frac{4}{3(n-1)}$. 
	If $n$ is even then $D$ is non-singular and $\mathbf{w}$ is the unique 
	solution of $D\mathbf{w}=\frac{1}{\alpha}\mathbf{e}$. Now assume that $n$ 
	is odd. Then from Remark \ref{w from jacklick}, 
	$\mathbf{w}=\frac{1}{4}(5-n,\beta\mathbf{v^{\prime}}-\mathbf{e^{\prime}},
	2\mathbf{e^{\prime}})^{\prime}$ for some $\beta\in \mathbb{R}$ and 
	$\mathbf{v}=(1,-1,1,-1,\ldots,1,-1)^{\prime}\in \mathbb{R}^{n-1}$. Note 
	that 	$	D^{\ssymbol{2}}\mathbf{e} =-\frac{1}{2} L\mathbf{e}+\alpha 
	\mathbf{w} \mathbf{w^{\prime}}\mathbf{e}=\alpha\mathbf{w}$. Thus  
	$\mathbf{w} \in R(D\ssymbol{2})=R(D)$ because $D$ is symmetric.
Since 
	$(0,\mathbf{v^{\prime}},\mathbf{0^{\prime}})^{\prime}\in 
	N(D)$, we get 
	$(0,\mathbf{v^{\prime}},\mathbf{0^{\prime}})\mathbf{w}=0$. That 
	is, $\frac{1}{4}\mathbf{v^{\prime}}(\beta\mathbf{v}-\mathbf{e})
	=\frac{1}{4}[\beta(n-1)-\mathbf{v^{\prime}}\mathbf{e}]=
	\frac{1}{4}\beta(n-1)=0$. Therefore, $\beta=0$ and hence  
	$\mathbf{w}=\frac{1}{4}(5-n,-\mathbf{e^{\prime}},
	2\mathbf{e^{\prime}})^{\prime}$.
	Suppose $L=\left[\begin{smallmatrix} \gamma & \mathbf{p^{\prime}} & 
	\mathbf{q^{\prime}}\\
		\mathbf{p} & A & B\\
		\mathbf{q} & B^{\prime} & C
	\end{smallmatrix}\right]$ where $\gamma \in \mathbb{R}$,  $	\mathbf{p},	
\mathbf{q} \in 
\mathbb{R}^{n-1}$ and $A$ and $B$ are  matrices of 
order $n-1$.
		Then the condition $L\mathbf{e}=\mathbf{0}$ gives the following three 
		equations:
	\begin{align}
		\gamma+\mathbf{p^{\prime}}\mathbf{e}+\mathbf{q^{\prime}}\mathbf{e}&=0 .
		\label{Le first}\\
		\mathbf{p}+A\mathbf{e}+B\mathbf{e}&=\mathbf{0}. \label{Le second}\\
		\mathbf{q}+B^{\prime}\mathbf{e}+C\mathbf{e}&=\mathbf{0}. \label{Le 
		third}
	\end{align}
By Theorem \ref{eqiv formulation}, 
	$LD=2\mathbf{w}
	\mathbf{e^{\prime}}-2I+\widetilde{V}$ with $D\widetilde{V}=O$ and 
	$\widetilde{V}(-\frac{1}{2} L+\alpha \mathbf{w}
	\mathbf{w^{\prime}}) =O$ where $\widetilde{V}$ is a symmetric matrix.
	Then by Lemma \ref{Necessary condition on V}, there exists an $(n-1)\times 
	(n-1)$ 
	symmetric matrix $X$ such that
	\begin{equation}\label{visualize of V}
		\widetilde{V}=\begin{bmatrix} 0 & \mathbf{0^{\prime}} & 
		\mathbf{0^{\prime}}\\
			\mathbf{0} & X & O\\
			\mathbf{0} & O & O
		\end{bmatrix} \text{  with } X 
		\mathbf{e}=\mathbf{0}.
	\end{equation}
Note that
	\begin{equation}\label{eqnH7}
		2\mathbf{w} \mathbf{e^{\prime}}-2I+\widetilde{V}=\begin{bmatrix}
			\frac{1-n}{2} & \frac{5-n}{2}\mathbf{e^{\prime}} &
			\frac{5-n}{2}\mathbf{e^{\prime}}\\[10pt]
			-\frac{1}{2}\mathbf{e}  & -\frac{1}{2}J-2I+X & -\frac{1}{2}J\\[10pt]
			\mathbf{e}  & J & J-2I
		\end{bmatrix}=LD.
	\end{equation}
Suppose
\begin{equation}\label{expression of LD}
	LD=Y=
	\begin{bmatrix}
		Y_{11}& Y_{12}& Y_{13}\\[1pt]
		Y_{21}& Y_{22}& Y_{23}\\[1pt]
		Y_{31}& Y_{32}& Y_{33}
	\end{bmatrix}
\end{equation}
where $Y$ is partitioned similarly to $L$.
	Equating the $(1,2)^{\text{th}}$ block of (\ref{eqnH7})
	and (\ref{expression of LD}) ,
	\begin{align}
	Y_{12}=	
	\gamma\mathbf{e^{\prime}}+2\mathbf{p^{\prime}}J-\mathbf{p^{\prime}}S+3\mathbf{q^{\prime}}J-\mathbf{q^{\prime}}S
		 &=\frac{5-n}{2}\mathbf{e^{\prime}}. \label{eqnH8}
	\end{align}
Similarly $(1,3)^{\text{rd}}$ block gives
	\begin{align}
Y_{13}=		
2\gamma\mathbf{e^{\prime}}+3\mathbf{p^{\prime}}J-\mathbf{p^{\prime}}S+4\mathbf{q^{\prime}}J-\mathbf{q^{\prime}}S-2\mathbf{q^{\prime}}
	&=\frac{5-n}{2}\mathbf{e^{\prime}}. \label{eqnH9}
\end{align}
	Subtracting (\ref{eqnH8}) from (\ref{eqnH9}) gives
	$\gamma\mathbf{e^{\prime}}+\mathbf{p^{\prime}}J
	+\mathbf{q^{\prime}}J=2\mathbf{q^{\prime}}$.
	 This implies that
	$2q_i=\gamma+\mathbf{p^{\prime}}\mathbf{e}+\mathbf{q^{\prime}}\mathbf{e}$ 
	for all $i=1,2,\ldots,n-1$ where 
	$\mathbf{q}=(q_1,q_2,\ldots,q_{n-1})^{\prime} \in 
	\mathbb{R}^{n-1}$. 
	Using (\ref{Le first}), we get $q_i=0$, for all $i$ 
	and hence $\mathbf{q}=\mathbf{0}$. We have 
	$C\mathbf{e}=-B^{\prime}\mathbf{e}$ from 
	(\ref{Le third}).
	Equating $(2,1)^{\text{th}}$ and $(3,1)^{\text{th}}$ blocks of (\ref{eqnH7})
and (\ref{expression of LD}), we get
	\begin{equation}\label{eqnH10}
	Y_{21}=			A\mathbf{e}+2B\mathbf{e} =-\frac{1}{2}\mathbf{e}  ~~~ 
		\text{and}~~~
	Y_{31}=			B^{\prime}\mathbf{e}+2C\mathbf{e} =\mathbf{e}. 
	\end{equation}
	Using $C\mathbf{e}=-B^{\prime}\mathbf{e}$ in (\ref{eqnH10}) gives 
	$C\mathbf{e}=\mathbf{e}$ and hence $B^{\prime}\mathbf{e}=-\mathbf{e}$. 
Now we show that $B$ is symmetric. Substituting the first 
	equation of (\ref{eqnH10}) in 
	(\ref{Le second}), we get $\mathbf{p}-B\mathbf{e}=\frac{1}{2}\mathbf{e}$. 
	Postmultiplying this equation by $\mathbf{e^{\prime}}$ gives
	$\mathbf{p}\mathbf{e^{\prime}}=BJ+\frac{1}{2}J$. Also, from the first 
	equation of (\ref{eqnH10}), we have $AJ=-2BJ-\frac{1}{2}J$. Note that  
	$Y_{22}=	
	\mathbf{p}\mathbf{e^{\prime}}+A(2J-S)+B(3J-S)$ and  $Y_{23}=	
	2\mathbf{p}\mathbf{e^{\prime}}+A(3J-S)+B(4J-S-2I)$.
	Comparing these with the corresponding 
	blocks of (\ref{eqnH7}) and using $\mathbf{p}\mathbf{e^{\prime}}$ and $AJ$, 
	we get $AS+BS=2I-X$ and $AS+BS+2B=O$.
	 This 
	implies $(vi)$ is proved and $2B=X-2I$. Hence $B$ is symmetric as $X$ is 
	symmetric.
	Therefore, 
	$A\mathbf{e}=-2B^{\prime}\mathbf{e} -\frac{1}{2}\mathbf{e}= 
	\frac{3}{2}\mathbf{e}$ 
	and $\mathbf{p}=\frac{-1}{2}\mathbf{e}$ 
	follows from (\ref{eqnH10}) and (\ref{Le second}) respectively. Also by 
	(\ref{Le first}), $\gamma =-\mathbf{p^{\prime}}\mathbf{e}=\frac{n-1}{2}$.
		Substituting $BJ=-J$, $CJ=J$ and $\mathbf{q}=\mathbf{0}$ in 
	(\ref{expression of LD}), and then equating $(3,2)^{\text{th}}$ and 
	$(3,3)^{\text{th}}$ blocks of (\ref{expression of LD}) and (\ref{eqnH7}), 
	we 
	have $BS+CS=O$ and $BS+CS+2C=2I$ which implies $C=I$ and $BS=-S$. 	This 
	proves $(iii)$. To complete the only  if part of the proof, it is 
	remaining  to show 
	$(B+I)A=O$ and 
	$(B+I)B=O$.
	It is easy to verify that $\widetilde{V}\mathbf{w}=\mathbf{0}$ where 
	$\widetilde{V}$ is given in (\ref{visualize of V}) with $X=2(B+I)$. Then 
	the conditions $(B+I)A=O$ and $(B+I)B=O$ are easily derived from 
	$\widetilde{V}(-\frac{1}{2} L+\alpha \mathbf{w}
	\mathbf{w^{\prime}}) =O$.
	
	Conversely, assume that $L=\left[\begin{smallmatrix}
		\frac{n-1}{2} & \frac{-1}{2}\mathbf{e^{\prime}} &
		\mathbf{0^{\prime}}\\
		\frac{-1}{2}\mathbf{e}  & A & B\\
		\mathbf{0}  & B^{\prime} & I
	\end{smallmatrix}\right]$ where the symmetric matrices $A$ and $B$ satisfy 
	the 
	conditions 
	$(i)-(vi)$. Using the conditions $(i)$ and $(ii)$, it is easy to see that 
	$L\mathbf{e}=\mathbf{0}$. Note that 
	$D\mathbf{w}=\frac{1}{\alpha}\mathbf{e}$ where
	$\mathbf{w}=\frac{1}{4}(5-n,-\mathbf{e^{\prime}},
	2\mathbf{e^{\prime}})^{\prime}$ and $\alpha =\frac{4}{3(n-1)}$. We claim 
	that $D^{\ssymbol{2}} =-\frac{1}{2}
	L+\alpha \mathbf{w} \mathbf{w^{\prime}}$. By  
	Theorem \ref{eqiv formulation}, it is enough to find a symmetric 
	matrix $\widetilde{V}$ 
	of order $2n-1$ satisfying 
	$LD+2I=2\mathbf{w} \mathbf{e^{\prime}}+\widetilde{V}$ with 
	$D\widetilde{V}=O$ and $\widetilde{V}(-\frac{1}{2} L+\alpha \mathbf{w}
	\mathbf{w^{\prime}}) =O$. 
 Choose $ 
	\widetilde{V}=\left[\begin{smallmatrix} 0 & \mathbf{0^{\prime}} & 
	\mathbf{0^{\prime}}\\
		\mathbf{0} & 2(B+I) & O\\
		\mathbf{0} & O & O
	\end{smallmatrix}\right]$. Then it is easy to see that $LD+2I=2\mathbf{w} 
	\mathbf{e^{\prime}}+\widetilde{V}$. From the assumptions $(i) $ to $(v)$, 
	it is clear that $D\widetilde{V}=O$, $\widetilde{V}L=O$  
	and $\widetilde{V}\mathbf{w}=\mathbf{0}$. Hence $\widetilde{V}(-\frac{1}{2} 
	L+\alpha 
	\mathbf{w}
	\mathbf{w^{\prime}}) =O $. This completes the proof.
	
\end{proof}

\subsection{A short proof of an inverse formula for $D(H_n)$}\label{short 
proof}
If $n$ is even, an inverse formula for $D(H_n)$ is given as the sum of 
symmetric Laplacian-like matrix and a rank one matrix (\cite{helm graph}, 
Theorem $3$). While deriving this, the inverse of the 
distance matrix $D(W_n)$ of a wheel graph $W_n$ is used. In 
this section, we offer a different 
proof of this result without employing any results pertaining to $D(W_n)^{-1}$.

We use circulant matrices of 
order $n-1$ while introducing the Laplacian-like matrices. The vectors 
defining 
the circulant matrices follow certain type of 
symmetry in the last $n-2$ coordinates, which we recall below.

\begin{definition}[\hspace{1sp}\cite{Balaji odd wheel graph,Balaji}]
Let $n \geq 4$.	A vector $\mathbf{z}=(z_1,z_2, \ldots, z_{n-1})^{\prime} \in 
\mathbb{R}^{n-1}$ is 
	said to follow 
	symmetry 
	in its 
	last $n-2$ coordinates if $z_i=z_{n+1-i} \text{ for 
		all } i=2,3,\ldots,n-1$. 
	\end{definition}
Let $\Delta$ be the collection of all vectors $\mathbf{z}\in \mathbb{R}^{n-1}$ 
that follow
symmetry in its 
last $n-2$ coordinates. That is,
$\Delta:=\{\mathbf{z}=(z_1,z_2,\ldots,z_{n-1})^{\prime}:z_i=z_{n+1-i}
\text{ 	for all } i=2,3,\ldots,n-1\}.$
\begin{remark}\label{symmetry of circulant matrices} 
	The above-defined $\Delta$ is a subspace of $\mathbb{R}^{n-1}$.
	It is observed in \cite{EDM} (see Theorem 10) that if $\mathbf{c}\in 
	\Delta$ 
	and $C=\cir(\mathbf{c}^{\prime})$, 
	then $C$ is a symmetric matrix.
\end{remark}
The following lemma is useful in computing the vector which follows symmetry.
\begin{lemma}\label{Delta set thorugh circulant matrix}
	Let $\alpha$ and $\beta$ be real numbers and 
	$\mathbf{g}=(\alpha,\beta,0,\ldots,0,\beta)^{\prime}\in \mathbb{R}^{n-1}$. 
	If 
	$G=\cir(\mathbf{g}^{\prime})$, then $(\mathbf{z}^{\prime}G)^{\prime}\in 
	\Delta$, for all 
	$\mathbf{z}\in \Delta$.
\end{lemma}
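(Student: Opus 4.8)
The plan is to exploit the circulant multiplication rules in (\ref{circulant properties}) together with the symmetry characterization recorded in Remark \ref{symmetry of circulant matrices}. The first observation is that $\mathbf{g}=(\alpha,\beta,0,\ldots,0,\beta)^{\prime}$ itself lies in $\Delta$: its only nonzero off-leading coordinates are $g_2=g_{n-1}=\beta$, so $g_i=g_{n+1-i}$ holds for all $i=2,\ldots,n-1$. Hence, by Remark \ref{symmetry of circulant matrices}, $G=\cir(\mathbf{g}^{\prime})$ is symmetric. Given $\mathbf{z}\in\Delta$, I would likewise set $Z=\cir(\mathbf{z}^{\prime})$, which is symmetric by the same remark.

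The key step is to realize $(\mathbf{z}^{\prime}G)^{\prime}$ as the defining vector of a \emph{symmetric} circulant matrix. By (\ref{circulant properties}), the product $ZG$ is again circulant with $ZG=\cir(\mathbf{z}^{\prime}G)$, so its first row is $\mathbf{z}^{\prime}G$ and its defining column vector is precisely $(\mathbf{z}^{\prime}G)^{\prime}$. Moreover, since circulant matrices commute, $ZG=GZ$, and therefore $(ZG)^{\prime}=G^{\prime}Z^{\prime}=GZ=ZG$; that is, $ZG$ is symmetric. Thus the problem reduces to showing that the defining vector of a symmetric circulant matrix must belong to $\Delta$.

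This reduction is exactly the converse of the implication stated in Remark \ref{symmetry of circulant matrices}, and establishing it is the only point requiring care; I would argue it directly from the entry pattern of a circulant of order $n-1$: comparing the first row with the first column of a symmetric $C=\cir(\mathbf{c}^{\prime})$ gives $c_j=C_{1j}=C_{j1}=c_{n+1-j}$ for $j=2,\ldots,n-1$, which is precisely the condition $\mathbf{c}\in\Delta$. Applying this with $C=ZG$ yields $(\mathbf{z}^{\prime}G)^{\prime}\in\Delta$, completing the argument. As an alternative that avoids the converse characterization, one could proceed entirely by indices: writing the $j$-th entry of $\mathbf{z}^{\prime}G$ as $(G\mathbf{z})_j=\sum_k g_{((k-j)\bmod(n-1))+1}\,z_k$ and reindexing $k\mapsto n+1-k$, the identities $g_i=g_{n+1-i}$ and $z_i=z_{n+1-i}$ force $(G\mathbf{z})_j=(G\mathbf{z})_{n+1-j}$. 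I expect the circulant-product route above to be the cleaner presentation, with the verification of the symmetric-circulant characterization being the main---though routine---obstacle.
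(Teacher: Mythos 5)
Your proof is correct, but it takes a genuinely different route from the paper. The paper argues coordinate-by-coordinate: it writes out the columns $G_{*k}$ explicitly and verifies $(\mathbf{z}^{\prime}G)_k=(\mathbf{z}^{\prime}G)_{n+1-k}$ directly from $z_i=z_{n+1-i}$, splitting into the cases $k=2$ and $3\leq k\leq n-2$ (essentially your ``alternative'' index argument, done with explicit columns rather than modular indices). You instead lift the problem to matrix algebra: observing $\mathbf{g}\in\Delta$, you get that both $G$ and $Z=\cir(\mathbf{z}^{\prime})$ are symmetric via Remark \ref{symmetry of circulant matrices}, use the identities in (\ref{circulant properties}) to write $ZG=\cir(\mathbf{z}^{\prime}G)$ and conclude $ZG$ is symmetric from commutativity, and then reduce to the \emph{converse} of the remark --- that a symmetric circulant has its defining vector in $\Delta$ --- which you correctly prove by comparing the first row and first column ($c_j=C_{1j}=C_{j1}=c_{n+1-j}$). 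That converse is the one point not available off-the-shelf in the paper, and you supply it; with it, your argument is complete and actually yields a slightly stronger byproduct, namely the full characterization that $\cir(\mathbf{c}^{\prime})$ is symmetric if and only if $\mathbf{c}\in\Delta$, from which the lemma follows for \emph{any} $\mathbf{g}\in\Delta$, not just those of the special form $(\alpha,\beta,0,\ldots,0,\beta)^{\prime}$. What the paper's computation buys in exchange is self-containedness: it needs nothing beyond the definition of $G$'s columns, whereas your route leans on the circulant product and commutation identities and on a structural fact about symmetric circulants.
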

\begin{proof}
	Since $G=\cir(\mathbf{g}^{\prime})$, we have
	\begin{equation*}
		G_{*k}=\begin{cases} \alpha 
			\mathbf{e_1}+\beta(\mathbf{e_2}+\mathbf{e_{n-1}}) & 
			\text{ 
				if } k=1,\\
			\beta(\mathbf{e_{k-1}}+\mathbf{e_{k+1}})+\alpha 
			\mathbf{e_k} & \text{ if } 2\leq k\leq n-2,\\
			\beta(\mathbf{e_1}+\mathbf{e_{n-2}})+\alpha 
			\mathbf{e_{n-1}} & \text{ if } k=n-1.
		\end{cases}
	\end{equation*}
	Let $\mathbf{z}=(z_1,z_2,\ldots,z_{n-1})^{\prime}\in \Delta$. Then, 
	$z_k=z_{n+1-k}$, for 
	$k=2,3,\ldots,n-1$. Here we denote the $k$-th coordinate of 
	$\mathbf{z}^{\prime}G$ by $(\mathbf{z}^{\prime}G)_{k}$. To show 
	$(\mathbf{z}^{\prime}G)^{\prime}\in \Delta$, we 
	first consider 
	\begin{align*}
		(\mathbf{z}^\prime G)_2 =\mathbf{z}^\prime G_{*2}
		=\mathbf{z}^\prime [\beta(\mathbf{e_1}+\mathbf{e_3})+\alpha 
		\mathbf{e_2}]
		&=\beta(z_1+z_3)+\alpha z_2\\
		&=\beta(z_1+z_{n-2})+\alpha z_{n-1}\\
		&=\mathbf{z}^\prime[\beta(\mathbf{e_1}+\mathbf{e_{n-2}})
		+\alpha\mathbf{e_{n-1}}]\\	\label{second coordinate of uz}
		&=(\mathbf{z}^{\prime}G)_{n-1}.
		\intertext{If $3\leq k\leq n-2$, then $3\leq n-k+1\leq n-2$. 
			The 
			$k$-th coordinate of $\mathbf{z}^\prime G$ is given by}
		(\mathbf{z}^\prime G)_k =\mathbf{z}^\prime G_{*k}
		=\mathbf{z}^\prime 
		[\beta(\mathbf{e_{k-1}}+\mathbf{e_{k+1}})+\alpha 
		\mathbf{e_k}] 
		&=\beta(z_{k-1}+z_{k+1})+\alpha z_k\\ 
		&=\beta(z_{n+1-(k-1)}+z_{n+1-(k+1)})+\alpha z_{n+1-k}\\ 
		&=\mathbf{z}^\prime[\beta(\mathbf{e_{n+1-(k-1)}}+\mathbf{e_{n+1-(k+1)}})
		+\alpha\mathbf{e_{n+1-k}}]\\ 
		&=(\mathbf{z}^{\prime}G)_{(n+1-k)}. 
	\end{align*}
	Hence the proof.
\end{proof}

To give an alternative proof of Theorem 3 in \cite{helm 
graph}, 
let us recall 
the 
symmetric Laplacian-like matrix $\mathcal{L}$ associated with $D(H_n)^{-1}$.
\begin{definition}[\cite{helm graph}]
Let $n\geq 4$ be even. 	 For $1 \leq k \leq \frac{n}{2}-1$, 
$\beta_k=(-1)^{k}[{(n-1)-2k}]$. Let 
\begin{equation*}
\mathbf{z}=\frac{1}{2}(n+1,\beta_1, \beta_{2}, 
\ldots,\beta_{\frac{n}{2}-2},\beta_{\frac{n}{2}-1},\beta_{\frac{n}{2}-1},
\beta_{\frac{n}{2}-2}, \ldots, \beta_{2}, \beta_{1})^{\prime} \in 
\mathbb{R}^{n-1}.
\end{equation*}
.  Define 
\begin{equation}\label{Even L}
	\mathcal{L}=\begin{bmatrix}
		\frac{n-1}{2} & \frac{-1}{2}\mathbf{e}^{\prime} 
		& 
		\mathbf{0}^{\prime} \\[9pt]
		\frac{-1}{2}\mathbf{e} & 	A & 	
		-I_{n-1}\\[10pt]
		\mathbf{0}  & -I_{n-1} & 
		I_{n-1}
	\end{bmatrix}~\text{where}~ 	A=\cir(\mathbf{z^{\prime}}).
\end{equation}
\end{definition}

Now we state Theorem 3 in \cite{helm graph} and present a short proof of this.
\begin{theorem}\label{alternate proof}
	Let $n\geq 4$ be an even integer. If 		$\mathcal{L}$ is the matrix 
	defined in (\ref{Even L})
		 then 
	\begin{equation*}
		D(H_n)^{-1}=
		-\frac{1}{2}\mathcal{L}+\frac{4}{3(n-1)}\mathbf{w}\mathbf{w^{\prime}}
	\end{equation*}
where $\mathbf{w}^{\prime}=\frac{1}{4}\left(5-n,-\mathbf{e}^{\prime}, 
2\mathbf{e}^{\prime}\right)\in \mathbb{R}^{2n-1}$.
\end{theorem}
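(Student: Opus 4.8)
The plan is to read off $\mathcal{L}$ as an instance of the Laplacian-like matrix characterized in Theorem \ref{necessary and sufficient conditions on L} and to verify conditions $(i)$--$(vi)$ there. Since $n$ is even, $D(H_n)$ is non-singular by Theorem $2$ of \cite{helm graph}, so $D(H_n)^{-1}=D(H_n)\ssymbol{2}$ and it suffices to establish the Moore--Penrose formula. Comparing (\ref{Even L}) with the block form prescribed in Theorem \ref{necessary and sufficient conditions on L}, we have $A=\cir(\mathbf{z^{\prime}})$, $B=-I_{n-1}$, and the required bottom-right block $I_{n-1}$. The vector $\mathbf{z}$ is symmetric in its last $n-2$ coordinates, so $\mathbf{z}\in\Delta$ and $A$ is symmetric by Remark \ref{symmetry of circulant matrices}; and $B=-I_{n-1}$ is symmetric. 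Thus the whole proof reduces to checking $(i)$--$(vi)$ for this $A$ and $B$.

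With $B=-I_{n-1}$ the conditions $(ii)$--$(v)$ are immediate: $B\mathbf{e}=-\mathbf{e}$ and $BS=-S$ give $(ii)$ and $(iii)$, while $B+I=O$ makes $(iv)$ and $(v)$ hold trivially. Condition $(vi)$, namely $(A+B)S+2B=O$, becomes $AS=S+2I$ after substituting $B=-I_{n-1}$; this is the heart of the argument. Here I would exploit that $A$ and $S$ are both circulant: by the identities in (\ref{circulant properties}) and the symmetry of $S$, $AS=\cir((S\mathbf{z})^{\prime})$, while $S+2I=\cir((\mathbf{s}+2\mathbf{e_1})^{\prime})$ with $\mathbf{s}+2\mathbf{e_1}=(4,1,0,\ldots,0,1)^{\prime}$. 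Since a circulant is determined by its defining vector, $(vi)$ is equivalent to the single vector identity $S\mathbf{z}=(4,1,0,\ldots,0,1)^{\prime}$. Once $(vi)$ is in hand, condition $(i)$ follows at once: postmultiplying $AS=S+2I$ by $\mathbf{e}$ and using the row-sum $S\mathbf{e}=4\mathbf{e}$ gives $4A\mathbf{e}=6\mathbf{e}$, i.e. $A\mathbf{e}=\tfrac{3}{2}\mathbf{e}$ (alternatively $(i)$ can be checked directly by summing the entries of $\mathbf{z}$).

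I expect the verification of $S\mathbf{z}=(4,1,0,\ldots,0,1)^{\prime}$ to be the main obstacle, though it is entirely elementary. Writing $S=\cir(\mathbf{s^{\prime}})$, the $j$-th coordinate of $S\mathbf{z}$ equals $2z_j+z_{j-1}+z_{j+1}$ with indices read cyclically, so the claim splits into two boundary coordinates (at $j=1$ and its neighbours, where the values $z_1=\tfrac{n+1}{2}$ and $\beta_1=3-n$ produce the entries $4$ and $1$) and an interior requirement $2z_j+z_{j-1}+z_{j+1}=0$. The interior cancellation is exactly where the alternating definition $\beta_k=(-1)^k[(n-1)-2k]$ is used: substituting the formula shows $\beta_{j-2}+2\beta_{j-1}+\beta_j=0$ identically, which forces the interior coordinates to vanish; the reflection point in the middle of $\mathbf{z}$, where $z_{n/2}=z_{n/2+1}$, must be treated on its own but collapses by the same substitution. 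Having confirmed $(i)$--$(vi)$, the ``if'' direction of Theorem \ref{necessary and sufficient conditions on L} gives $D(H_n)\ssymbol{2}=-\tfrac{1}{2}\mathcal{L}+\tfrac{4}{3(n-1)}\mathbf{w}\mathbf{w^{\prime}}$ with $\mathbf{w}=\tfrac{1}{4}(5-n,-\mathbf{e^{\prime}},2\mathbf{e^{\prime}})^{\prime}$, and non-singularity of $D(H_n)$ converts this into the stated inverse formula.
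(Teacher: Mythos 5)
Your proposal is correct and follows essentially the same route as the paper: reduce to the ``if'' direction of Theorem \ref{necessary and sufficient conditions on L} with $B=-I_{n-1}$ (making $(ii)$--$(v)$ trivial), and verify $(vi)$ by the circulant identity $\mathbf{z}^{\prime}S=\mathbf{s}^{\prime}+2\mathbf{e_1}^{\prime}$, checked coordinatewise via $\beta_{j-2}+2\beta_{j-1}+\beta_j=0$ with special boundary and midpoint coordinates. The only cosmetic difference is that you deduce $(i)$ from $(vi)$ by postmultiplying $AS=S+2I$ by $\mathbf{e}$, whereas the paper sums the entries of $\mathbf{z}$ directly; both are one-line verifications.
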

 \begin{proof}
 	Since $n$ is even, $D(H_n)$ is non-singular \cite{helm graph}. Therefore,  
 	$D(H_n)\ssymbol{2}= D(H_n)^{-1}$.
 It is straight forward that the blocks of $\mathcal{L}$ satisfy the 
 conditions $(ii)-(v)$ given in 
 Theorem \ref{necessary and sufficient conditions on L}. From 
 \cite{helm graph}, we have 
 $\sum_{k=1}^{\frac{n}{2}-1}(-1)^k(n-1-2k)=\frac{2-n}{2}$. Since 
 $A=\cir(\mathbf{z^{\prime}})$, we get
 \begin{equation*}
 	 	A\mathbf{e}= (\mathbf{z}^{\prime}\mathbf{e})\mathbf{e}=
 	 \frac{1}{2}\left( (n+1)+ 
 	 \sum_{k=1}^{\frac{n}{2}-1}2\beta_{k}\right)\mathbf{e}
 	 =\left( \frac{n+1}{2}+ 
 	 \sum_{k=1}^{\frac{n}{2}-1}(-1)^k(n-1-2k)\right)\mathbf{e}
 	 =\frac{3}{2}	\mathbf{e}.
 \end{equation*}
If we prove $(A-I)S-2I=O$ then the desired result follows by Theorem 
\ref{necessary and sufficient conditions on L} together with Remark \ref{w from 
jacklick}. By 
 (\ref{circulant properties}), it is enough to prove 
$\mathbf{z}^{\prime}S=\mathbf{s}^{\prime}+2\mathbf{e_1}^{\prime}$. We start 
with computing 
$\mathbf{z}^{\prime}S$.
Let	
$\mathbf{z}^{\prime}S=(r_1,r_2,\ldots, 
r_{n-1})$. Then 
$r_i=\mathbf{z}^{\prime}S_{*i}$. Since 
$\mathbf{z} \in 
\Delta$, $(\mathbf{z}^{\prime}S)^{\prime} \in 
\Delta$ 
by Lemma \ref{Delta set thorugh circulant matrix}. It is enough to compute the
first $\frac{n}{2}$ coordinates of 
$\mathbf{z}^{\prime}S$.
 We have 	$r_1	=\mathbf{z}^{\prime}
(2\mathbf{e_1}+\mathbf{e_2}+\mathbf{e_{n-1}})=\frac{1}{2}[2(n+1)+2\beta_1]
=(n+1)+(-1)(n-1-2)	=4$ and 
$	r_2
=\mathbf{z}^{\prime}(\mathbf{e_1}+2\mathbf{e_2}+\mathbf{e_3})
=\frac{1}{2}[(n+1)+2\beta_1+\beta_2]=1$. 
If $3 \leq i \leq \frac{n}{2}-1$, then
$r_i
=\mathbf{z}^{\prime}(\mathbf{e_{i-1}}+	2\mathbf{e_i}+\mathbf{e_{i+1}})
=\beta_{i-2}+2\beta_{i-1}+\beta_{i}=0$.
Similarly, we see that 
$r_{\frac{n}{2}}=\beta_{\frac{n}{2}-2}+3\beta_{\frac{n}{2}-1}=0$.
 Thus,
$\mathbf{z}^{\prime}S= (4,1,0,0,\ldots,0,1)= 
\mathbf{s}^{\prime}+2\mathbf{e_1}^{\prime}$. 
 \end{proof}


\subsection{The Moore-Penrose Inverse of $D(H_n)$}\label{Moore-Penrose inverse}
In this section, we derive a formula for the Moore-Penrose Inverse of $D(H_n)$,
when $n$ is odd. This is an analogous to the formula given for $D(H_n)^{-1}$. 
To obtain the desired formula, we introduce the Laplacian-like matrix $L$, 
similar to $\mathcal{L}$ in (\ref{Even L}), involving two circulant matrices 
$A$ and $B$ which are defined by the vectors  $\mathbf{x}$ and 
$\mathbf{y}$ in  $\mathbb{R}^{n-1}$ respectively. The  vectors  $\mathbf{x}$ 
and 
$\mathbf{y}$  are identified from the 
numerical examples. Hereafter it is 
assumed that $n\geq 5$  is an odd integer and 
$m=\frac{n-1}{2}$ is fixed. 

Let $1\leq k \leq m$. For each $k$,
we define $\alpha_k \in \mathbb{R}$ and we fix the vectors $\mathbf{x}$ and 
$\mathbf{y}$ in 
$\mathbb{R}^{n-1}$   which are given by
\begin{align}\label{c_k and alpha_k}
	\alpha_k&:=(-1)^{k+1}[{2m^2-6(m-k)^2+7}],\\
\label{x and y}
	\mathbf{x}&:=\frac{1}{6(n-1)}(n^2+4n-12,\alpha_1, \alpha_{2}, 
	\ldots,\alpha_{m-1},\alpha_m,
	\alpha_{m-1},\alpha_{m-2}, \ldots, \alpha_{2}, \alpha_{1})^{\prime},
	\intertext{and}	\label{x and y1}
	\mathbf{y}&:=\frac{1}{(n-1)}(2-n,-1,1,-1,1,\ldots,-1, 1,-1)^{\prime}.
\end{align}
Using Theorem \ref{necessary and sufficient conditions on L} and the above 
defined vectors $\mathbf{x}$ and 
$\mathbf{y}$, we now construct the symmetric Laplacian-like matrix $L$ 
associated with the formula for $D(H_n)\ssymbol{2}$.
\begin{definition}\label{defn of L}
	Let $n\geq 5$ be an odd integer. 
	Let
	$A=\cir(\mathbf{x^{\prime}})$ and 
	$B=\cir(\mathbf{y^{\prime}})$ where $\mathbf{x}$ and 
	$\mathbf{y}$ are given in (\ref{x and y}) and (\ref{x and y1}) 
	respectively. Define 

	\begin{equation}\label{L defn}
		L=\begin{bmatrix}
			\frac{n-1}{2} & \frac{-1}{2}\mathbf{e}^{\prime} 
			& 
			\mathbf{0}^{\prime} \\[9pt]
			\frac{-1}{2}\mathbf{e} & 	A & 	B\\[10pt]
			\mathbf{0}  & B & 
			I_{n-1}
		\end{bmatrix}.
	\end{equation}
\end{definition}

\begin{remark}
	Note that the vectors 
	$\mathbf{x}$ and 
	$\mathbf{y}$ are in $\Delta$. By  Remark  \ref{symmetry of 
	circulant 		matrices}, $A$ and $B$ are symmetric matrices of order 
	$n-1$. 
	Thus the matrix $L$, of order $2n-1$, given in the above 
	definition is symmetric.
\end{remark}
We now state the result  which gives the desired formula for 
$D(H_n)\ssymbol{2}$.
\begin{theorem}\label{Inverse Formula}
	Let $n\geq 5$ be an odd integer and $L$ be the matrix given in Definition 
	\ref{defn of L}. 
	Suppose 
	$\mathbf{w}=\frac{1}{4}\left(5-n,-\mathbf{e}^{\prime}, 
	2\mathbf{e}^{\prime}\right)^{\prime}\in \mathbb{R}^{2n-1}$ and $D$ is the 
	distance 
	matrix  
	of $H_n$. Then 
	\begin{equation*}
		D\ssymbol{2}=
		-\frac{1}{2}{L}+\frac{4}{3(n-1)}\mathbf{w}\mathbf{w^{\prime}}.
	\end{equation*}
\end{theorem}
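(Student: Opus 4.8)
The plan is to apply Theorem \ref{necessary and sufficient conditions on L} directly: since $n$ is odd, $D$ is singular, so $D\ssymbol{2}$ is the genuine Moore-Penrose inverse, and the theorem tells us that the claimed formula holds if and only if the circulant blocks $A=\cir(\mathbf{x^{\prime}})$ and $B=\cir(\mathbf{y^{\prime}})$ satisfy the six conditions $(i)$--$(vi)$. The matrix $L$ in Definition \ref{defn of L} already has the required shape (with $I_{n-1}$ in the bottom-right block), and by the preceding remark $A$ and $B$ are symmetric. Hence the entire proof reduces to verifying the six algebraic identities, after which the formula for $D\ssymbol{2}$ follows immediately, together with Remark \ref{w from jacklick} which fixes $\alpha=\frac{4}{3(n-1)}$ and the vector $\mathbf{w}$.

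First I would dispose of the two row-sum conditions. For $(ii)$, $B\mathbf{e}=(\mathbf{y^{\prime}}\mathbf{e})\mathbf{e}$ since $B$ is circulant, and summing the entries of $\mathbf{y}$ in (\ref{x and y1}) gives $\mathbf{y^{\prime}}\mathbf{e}=\frac{1}{n-1}\big((2-n)+(-1+1-\cdots-1)\big)=\frac{1}{n-1}\big((2-n)-1\big)=-1$, so $B\mathbf{e}=-\mathbf{e}$. For $(i)$, I would compute $\mathbf{x^{\prime}}\mathbf{e}$ from (\ref{x and y}); the constant term contributes $\frac{n^2+4n-12}{6(n-1)}$ and the doubled sum $2\sum_{k=1}^{m-1}\alpha_k+\alpha_m$ of the $\alpha_k$ must be evaluated using the closed form (\ref{c_k and alpha_k}). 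This is a finite alternating sum of a quadratic in $k$, which I would evaluate by standard telescoping/summation formulas, checking that the total equals $\frac{3}{2}$ so that $A\mathbf{e}=\frac{3}{2}\mathbf{e}$. This mirrors exactly the even-case computation in Theorem \ref{alternate proof}.

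The main work is the three product identities $(iii)$, $(v)$, $(vi)$ (with $(iv)$ following similarly). Because $A,B,S$ are all circulant of order $n-1$, the property (\ref{circulant properties}) lets me replace each matrix identity by a single vector identity: $BS=-S$ becomes $\mathbf{y^{\prime}}S=-\mathbf{s^{\prime}}$; $(B+I)B=O$ becomes $(\mathbf{y}+\mathbf{e_1})^{\prime}B=\mathbf{0^{\prime}}$; and $(A+B)S+2B=O$ becomes $(\mathbf{x}+\mathbf{y})^{\prime}S+2\mathbf{y^{\prime}}=\mathbf{0^{\prime}}$. Since $\mathbf{x},\mathbf{y}\in\Delta$, Lemma \ref{Delta set thorugh circulant matrix} guarantees each resulting vector again lies in $\Delta$, so I only need to verify the first $m=\frac{n-1}{2}$ coordinates of each. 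I expect the hardest step to be $(vi)$: it couples $\mathbf{x}$ and $\mathbf{y}$ through $S$, and because $\mathbf{x}$ is built from the quadratic coefficients $\alpha_k$, the convolution $\mathbf{x^{\prime}}S$ produces second-difference expressions $\alpha_{k-2}+2\alpha_{k-1}+\alpha_k$ (and boundary variants at $k=1,2$ and near $k=m$). I would show that the interior second differences collapse to a constant determined by $2m^2-6(m-k)^2+7$, and then check that adding $\mathbf{y^{\prime}}S$ and $2\mathbf{y^{\prime}}$ cancels coordinate-by-coordinate. The boundary coordinates (the first two, and those around index $m$ where the symmetry of $\Delta$ folds the circulant) will require separate, careful arithmetic. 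Once $(i)$--$(vi)$ are all confirmed, Theorem \ref{necessary and sufficient conditions on L} yields the stated expression for $D\ssymbol{2}$, completing the proof.
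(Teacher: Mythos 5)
Your proposal is correct and follows essentially the same route as the paper: the paper likewise invokes Theorem \ref{necessary and sufficient conditions on L} together with Remark \ref{w from jacklick} and reduces everything to verifying conditions (i)--(vi), which it carries out in Lemmas \ref{Ae and Be lemma} and \ref{suff lemma 2} by exactly the row-sum and coordinate-wise circulant computations you describe (including the second-difference collapse $\alpha_{i-2}+2\alpha_{i-1}+\alpha_i$ in condition (vi)). The only deviation is that for (iv) and (v) the paper argues via simultaneous diagonalization of the commuting circulants (Theorem \ref{simultaneous diadonalization} and Lemma \ref{eigenvalues of B}) rather than your reduction to the vector identities $(\mathbf{y}+\mathbf{e_1})^{\prime}A=\mathbf{0}^{\prime}$ and $(\mathbf{y}+\mathbf{e_1})^{\prime}B=\mathbf{0}^{\prime}$; both ultimately hinge on the same computation $\mathbf{x}^{\prime}\mathbf{v}=0$, so this is a minor variation rather than a different approach.
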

A proof of this result is based on the following three lemmas.
\begin{lemma}\label{Ae and Be lemma}
	Let $A$ and $B$ be the matrices given in Definition 
	\ref{defn of L}. 
	Then 
		\begin{itemize}
			\item[(i)]$A\mathbf{e}
			=\frac{3}{2}\mathbf{e}$  
			\item [(ii)] $B\mathbf{e}=-\mathbf{e}$.
		\end{itemize}
\end{lemma}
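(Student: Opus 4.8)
The plan is to prove both identities by a direct computation exploiting the fact that $A = \cir(\mathbf{x}^{\prime})$ and $B = \cir(\mathbf{y}^{\prime})$, so that $A\mathbf{e} = (\mathbf{x}^{\prime}\mathbf{e})\mathbf{e}$ and $B\mathbf{e} = (\mathbf{y}^{\prime}\mathbf{e})\mathbf{e}$; every row of a circulant matrix has the same entry sum, namely the sum of the coordinates of its defining vector. Thus the entire problem reduces to evaluating the two scalar sums $\mathbf{x}^{\prime}\mathbf{e}$ and $\mathbf{y}^{\prime}\mathbf{e}$ and checking they equal $\tfrac{3}{2}$ and $-1$ respectively.

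For part (ii), the computation is immediate. By (\ref{x and y1}), the vector $(n-1)\mathbf{y} = (2-n,-1,1,-1,\ldots,1,-1)^{\prime}$ has first coordinate $2-n$ followed by $n-2$ coordinates that alternate $-1,1,\ldots$. Since $n$ is odd, $n-2$ is odd, and I would check that these $n-2$ alternating $\pm 1$ entries sum to $-1$ (there is one more $-1$ than $+1$). Hence $(n-1)\,\mathbf{y}^{\prime}\mathbf{e} = (2-n) + (-1) = 1-n$, giving $\mathbf{y}^{\prime}\mathbf{e} = -1$ and therefore $B\mathbf{e} = -\mathbf{e}$.

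For part (i), the work is concentrated in summing the coordinates of $\mathbf{x}$ from (\ref{x and y}). Writing $6(n-1)\mathbf{x}^{\prime}\mathbf{e} = (n^2+4n-12) + 2\sum_{k=1}^{m-1}\alpha_k + \alpha_m$, where $m = \frac{n-1}{2}$ and $\alpha_k = (-1)^{k+1}[2m^2 - 6(m-k)^2 + 7]$ by (\ref{c_k and alpha_k}). The main obstacle will be evaluating the alternating sum $\sum_{k=1}^{m} \alpha_k$ in closed form. The plan is to substitute $j = m-k$ so the bracket becomes $2m^2 - 6j^2 + 7$ with a sign $(-1)^{m-j+1}$, and then separate the alternating sums $\sum (-1)^{j}$ and $\sum (-1)^{j} j^2$; the latter telescopes via the identity $(-1)^j j^2 + (-1)^{j+1}(j+1)^2 = (-1)^{j+1}(2j+1)$. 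After assembling these pieces (being careful with the parity of $m$ and that $m$ is counted once rather than twice), I expect $6(n-1)\mathbf{x}^{\prime}\mathbf{e}$ to collapse to $9(n-1)$, yielding $\mathbf{x}^{\prime}\mathbf{e} = \tfrac{3}{2}$ and hence $A\mathbf{e} = \tfrac{3}{2}\mathbf{e}$. These row-sum conditions are exactly conditions (i) and (ii) of Theorem \ref{necessary and sufficient conditions on L}, so establishing them here is the first input toward the proof of Theorem \ref{Inverse Formula}.
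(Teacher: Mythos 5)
Your proposal is correct and takes essentially the same approach as the paper: both reduce the lemma to the scalar sums $\mathbf{e}^{\prime}\mathbf{x}$ and $\mathbf{e}^{\prime}\mathbf{y}$ via the circulant row-sum property, and both then evaluate the alternating sum $2\sum_{k=1}^{m-1}\alpha_k+\alpha_m$ (the paper via the parity-split closed forms (\ref{sum of 1 and -1})--(\ref{sum of k square}), you via index reversal and telescoping---an immaterial difference in bookkeeping). Your predicted collapse $6(n-1)\,\mathbf{x}^{\prime}\mathbf{e}=9(n-1)$ agrees exactly with the paper's computation $(n^2+4n-12)+(-n^2+5n+3)=9(n-1)$, so the plan goes through as stated.
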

\begin{proof}
	Since $A=\cir(\mathbf{x}^{\prime})$, all the row sums of 
	$A$ are 
	equal to $\mathbf{e}^{\prime}\mathbf{x}$. Therefore, 
	$A\mathbf{e}=(\mathbf{e}^{\prime}\mathbf{x})\mathbf{e}$. 
	To determine $\mathbf{e}^{\prime}\mathbf{x}$, we consider
	\begin{align}
		\notag
		\sum_{k=1}^{m-1}\alpha_k
		&=\sum_{k=1}^{m-1}(-1)^{k+1}
		(2m^2-6(m-k)^2+7)\\\label{sum alpha k upto m-1}
		&=(7-4m^2)\sum_{k=1}^{m-1}(-1)^{k+1}
		+12m\sum_{k=1}^{m-1}(-1)^{k+1}k-6\sum_{k=1}^{m-1}(-1)^{k+1}k^2.
	\end{align}	
	By a simple verification, it is easy to see that
	\begin{align}\label{sum of 1 and -1}
		\sum_{k=1}^{m-1}(-1)^{k+1}&=\begin{cases}
			1 & \text{ if $m$ is even},\\
			0 & \text{ if $m$ is odd},
		\end{cases}\\\label{sum of k}
		\sum_{k=1}^{m-1}(-1)^{k+1}k&=\frac{1}{2}\begin{cases}
			m & \text{ if $m$ is even},\\
			1-m & \text{ if $m$ is odd}, 
		\end{cases}
	\intertext{and}\label{sum of k square}
		\sum_{k=1}^{m-1}(-1)^{k+1}k^2&=\frac{1}{2}\begin{cases}
			m(m-1) & \text{ if $m$ is even},\\
			-m(m-1) & \text{ if $m$ is odd}. 
		\end{cases}
	\end{align}
	Using (\ref{sum alpha k upto m-1})-
	(\ref{sum of k 		square}) and
 $\alpha_m=(-1)^{m+1}(2m^2+7)$, 
	we 
	get
	\begin{align*}
	2\sum_{k=1}^{m-1}\alpha_k+\alpha_m		 
		&=\begin{cases} 2(-m^2+3m+7)-(2m^2+7) & 
		\text{ if $m$ is even},\\
			2(-3m^2+3m)+(2m^2+7) & \text{if $m$ is odd}.\end{cases}\\
		&=-4m^2+6m+7\\
		&=\left[-4\left(\frac{n-1}{2}\right)^2
		+6\left(\frac{n-1}{2}\right)+7\right]\\
		&=-n^2+5n+3.
	\end{align*} 
Note that  
$\mathbf{e}^{\prime}\mathbf{x}=\frac{1}{6(n-1)}[(n^2+4n-12)+	
2\sum_{k=1}^{m-1}\alpha_k+\alpha_m]$. Thus, 
$\mathbf{e}^{\prime}\mathbf{x}=\frac{3}{2}$ which implies $A\mathbf{e}
=\frac{3}{2}\mathbf{e}$. This proves (i). Since $n-1$ is 
even, we have $\mathbf{e}^{\prime}\mathbf{y}=-1$ and hence 
$B\mathbf{e}=-\mathbf{e}$.
\end{proof}
A square matrix $M$ is said to be \textit{diagonalizable} if there exists a 
non-singular matrix $P$ such that $P^{-1}MP=
\diag(\lambda_1,\lambda_{2},\ldots,\lambda_{n})$ where 
$\lambda_1,\lambda_{2},\ldots,\lambda_{n}$ are the eigenvalues of $M$. 
We now recall a result on simultaneous diagonalization which will be frequently 
used the proofs.
\begin{theorem}(\cite{Horn})\label{simultaneous diadonalization}
	Let $M_1,M_2, \ldots, M_k$  
	be diagonalizable matrices. Then $M_1,M_2, \ldots, M_k$ are simultaneously 
	diagonalizable if and only if $M_iM_j=M_jM_i$ for all $i,j\in 
	\{1,2,\ldots,k\}$. Moreover, if $\lambda_1,\lambda_{2},\ldots,\lambda_{n}$ 
	are the eigenvalues of $M_1$, then there exists a non-singular matrix $P$ 
	such that $P^{-1}M_1P=
	\diag(\lambda_1,\lambda_{2},\ldots,\lambda_{n})$ and $P^{-1}M_jP$ is 
	diagonal for all $j=2,3,\ldots,k$.
\end{theorem}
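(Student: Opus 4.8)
The plan is to prove the two implications separately, dispatching the forward direction in a single line and reserving the real work for the converse, which I would establish by induction on the common order $n$ of the matrices. For the forward direction, suppose there is a non-singular $P$ with $P^{-1}M_iP=D_i$ diagonal for every $i$. Since diagonal matrices commute, $M_iM_j=P D_i P^{-1} P D_j P^{-1}=P D_i D_j P^{-1}=P D_j D_i P^{-1}=M_jM_i$, so the family commutes pairwise; this uses nothing beyond the multiplicativity of conjugation. I will work over $\mathbb{C}^n$ (the argument is identical over any field over which each $M_i$ is diagonalizable, which covers the real symmetric matrices to which the theorem is later applied).

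The substantial content is the converse, and I would first isolate the auxiliary fact that the restriction of a diagonalizable matrix to an invariant subspace is again diagonalizable. This follows from the characterization that a matrix is diagonalizable if and only if its minimal polynomial is a product of distinct linear factors, together with the observation that the minimal polynomial of a restriction divides that of the ambient operator; a divisor of a product of distinct linear factors is again such a product. With this in hand I would argue by induction on $n$. The base case $n=1$ is immediate. For the inductive step, if every $M_i$ is a scalar multiple of the identity, then any basis simultaneously diagonalizes the whole family. Otherwise choose some $M_i$, say $M_1$, that is not scalar; since $M_1$ is diagonalizable, $\mathbb{C}^n$ is the direct sum of the eigenspaces $V_{\mu_1},\dots,V_{\mu_r}$ of $M_1$ with $r\ge 2$ distinct eigenvalues $\mu_1,\dots,\mu_r$, so each $\dim V_{\mu_t}<n$. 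Because each $M_j$ commutes with $M_1$, every $V_{\mu_t}$ is $M_j$-invariant: if $M_1\mathbf{x}=\mu_t\mathbf{x}$ then $M_1(M_j\mathbf{x})=M_j(M_1\mathbf{x})=\mu_t(M_j\mathbf{x})$. Restricting the family to each $V_{\mu_t}$ yields a commuting family of operators that are diagonalizable by the auxiliary fact, acting on a space of dimension strictly less than $n$; the inductive hypothesis makes each such restricted family simultaneously diagonalizable. Concatenating the resulting eigenbases of the $V_{\mu_t}$ produces a basis of $\mathbb{C}^n$ simultaneously diagonalizing all $M_j$ (on which $M_1$ is automatically diagonal, acting as $\mu_t I$ on $V_{\mu_t}$), and taking these vectors as the columns of $P$ gives the required non-singular matrix.

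For the ``Moreover'' clause I would note that any simultaneous diagonalization already renders $P^{-1}M_1P$ diagonal with the eigenvalues of $M_1$ along its diagonal in some order. Permuting the columns of $P$ amounts to conjugating each $P^{-1}M_jP$ by a permutation matrix, which keeps every one of them diagonal while permuting the diagonal entries of $P^{-1}M_1P$ arbitrarily; selecting the permutation that realizes the prescribed order $\lambda_1,\dots,\lambda_n$ completes the proof. The main obstacle I anticipate is the inductive step of the converse, and within it the auxiliary fact that restrictions to invariant subspaces remain diagonalizable: this is precisely where the minimal-polynomial characterization is indispensable, and one must check that splitting off the eigenspaces of the chosen non-scalar $M_1$ genuinely lowers the dimension, so that the induction is well-founded.
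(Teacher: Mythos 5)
Your proof is correct and complete: the forward direction, the induction on dimension via the eigenspace decomposition of a non-scalar member of the family (with the minimal-polynomial lemma guaranteeing that restrictions to invariant subspaces stay diagonalizable), and the permutation-of-columns argument for the ``Moreover'' clause are all sound. Note that the paper itself offers no proof of this statement --- it is quoted as a known result from the cited reference of Horn and Johnson --- and your argument is essentially the standard one found there, so there is no divergence to report.
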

 If $\lambda_1, \lambda_2, \ldots, \lambda_k$ are the distinct eigenvalues a 
 real symmetric matrix $M$
 with respective multiplicities $m_1,m_2,\ldots,m_k$, then the set of all 
 eigenvalues of $M$ is given by 
 $\sigma(M)=\left\{\lambda_1^{[m_1]},\lambda_2^{[m_2]},\ldots,\lambda_k^{[m_k]}\right\}$.
\begin{lemma}\label{eigenvalues of B}
Let $n\geq 5$ be an odd integer. Let $B=\cir(\mathbf{y^{\prime}})$ where the 
vector $\mathbf{y}$ is given in (\ref{x and y1}). Then
$\sigma(B)=\left \{0^{[1]}, -1^{[n-2]} \right \}~ \text{and} ~ 
\sigma\left(B-\frac{1}{2(n-1)}J_{n-1}\right)=\left \{0^{[1]}, 
-1^{[n-3]},-\frac{3}{2}^{[1]}\right\}.$
\end{lemma}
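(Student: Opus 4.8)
The plan is to reduce $B$ to a rank-one perturbation of a scalar matrix and then read off both spectra directly, handling $J_{n-1}$ by the same decomposition. First I would observe that the generating vector splits as $\mathbf{y}=\frac{1}{n-1}\mathbf{v}-\mathbf{e_1}$, where $\mathbf{v}=(1,-1,1,-1,\ldots,1,-1)^{\prime}\in\mathbb{R}^{n-1}$ is the alternating vector from Theorem \ref{rank of H}. Since $\cir(\mathbf{e_1}^{\prime})=I_{n-1}$, the linearity in (\ref{circulant properties}) gives $B=\frac{1}{n-1}\cir(\mathbf{v}^{\prime})-I_{n-1}$. The structural key is that, because $n-1$ is even, the circulant generated by $\mathbf{v}$ is exactly the rank-one matrix $\mathbf{v}\mathbf{v}^{\prime}$: its $(i,j)$ entry is $(-1)^{(j-i)\bmod(n-1)}=(-1)^{\,i+j}=v_iv_j$, where the middle equality uses that reduction modulo the even number $n-1$ preserves parity. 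Hence $B=\frac{1}{n-1}\mathbf{v}\mathbf{v}^{\prime}-I_{n-1}$.

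With this form the first spectrum is immediate. The rank-one matrix $\mathbf{v}\mathbf{v}^{\prime}$ has the single nonzero eigenvalue $\mathbf{v}^{\prime}\mathbf{v}=n-1$ with eigenvector $\mathbf{v}$, and eigenvalue $0$ of multiplicity $n-2$ on $\mathbf{v}^{\perp}$. Therefore $B$ has eigenvalue $\frac{n-1}{n-1}-1=0$ on the line $\s\{\mathbf{v}\}$ and eigenvalue $-1$ on its orthogonal complement, which yields $\sigma(B)=\{0^{[1]},(-1)^{[n-2]}\}$. As a consistency check, $\mathbf{e}$ lies in $\mathbf{v}^{\perp}$, and indeed $B\mathbf{e}=-\mathbf{e}$ by Lemma \ref{Ae and Be lemma}.

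For the shifted matrix I would write $J_{n-1}=\mathbf{e}\mathbf{e}^{\prime}$, so that $B-\frac{1}{2(n-1)}J_{n-1}=\frac{1}{n-1}\mathbf{v}\mathbf{v}^{\prime}-\frac{1}{2(n-1)}\mathbf{e}\mathbf{e}^{\prime}-I_{n-1}$. The decisive fact is the orthogonality $\mathbf{v}^{\prime}\mathbf{e}=0$, again a consequence of $n-1$ being even, so the two rank-one terms act on orthogonal directions and can be diagonalized together. On $\s\{\mathbf{v}\}$ the eigenvalue is $\frac{n-1}{n-1}-1=0$; on $\s\{\mathbf{e}\}$ it is $-\frac{1}{2(n-1)}(n-1)-1=-\frac{3}{2}$; and on the $(n-3)$-dimensional complement of $\s\{\mathbf{v},\mathbf{e}\}$ it is $-1$. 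This gives $\sigma\big(B-\frac{1}{2(n-1)}J_{n-1}\big)=\{0^{[1]},(-1)^{[n-3]},-\frac{3}{2}^{[1]}\}$, as claimed.

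The only delicate points are the two identities $\cir(\mathbf{v}^{\prime})=\mathbf{v}\mathbf{v}^{\prime}$ and $\mathbf{v}^{\prime}\mathbf{e}=0$, both of which rest on the parity of $n-1$; everything after that is a routine eigenvalue count for orthogonal rank-one matrices. An equivalent route, more in the spirit of Theorem \ref{simultaneous diadonalization}, is to note that $B$, $I_{n-1}$ and $J_{n-1}$ are all circulants of order $n-1$, hence commute and are simultaneously diagonalized by the Fourier basis; computing the two discrete-Fourier eigenvalue sequences reproduces the same spectra, with the single nonzero value of $\cir(\mathbf{v}^{\prime})$ occurring at the frequency $j=m$ for which the associated $(n-1)$-th root of unity equals $-1$.
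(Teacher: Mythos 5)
Your proof is correct, and for the harder half of the lemma it takes a genuinely different route from the paper. For $\sigma(B)$ the two arguments essentially coincide: the paper also writes $\mathbf{y}=\frac{1}{n-1}\mathbf{v}-\mathbf{e_1}$, notes that $\cir(\mathbf{v}^{\prime})$ has rank one (its rows are $\pm\mathbf{v}^{\prime}$) with $\cir(\mathbf{v}^{\prime})\mathbf{v}=(n-1)\mathbf{v}$, and shifts by $-I_{n-1}$; your observation that $\cir(\mathbf{v}^{\prime})$ is literally $\mathbf{v}\mathbf{v}^{\prime}$ (parity is preserved under reduction modulo the even number $n-1$) is just a sharper statement of the same fact. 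The divergence is in the shifted matrix. The paper does not decompose it at all; instead it invokes Theorem \ref{simultaneous diadonalization} for the commuting circulants $B$ and $\frac{1}{2(n-1)}J_{n-1}$, and then must decide how the eigenvalue $\frac{1}{2}$ of $\frac{1}{2(n-1)}J_{n-1}$ aligns against the spectrum $\{0,-1,\ldots,-1\}$ of $B$; that alignment is settled indirectly, by a contradiction argument resting on the singularity of $B-\frac{1}{2(n-1)}J_{n-1}$ (witnessed by the common null vector $\mathbf{v}$). You resolve exactly the same alignment question transparently: writing $B-\frac{1}{2(n-1)}J_{n-1}=\frac{1}{n-1}\mathbf{v}\mathbf{v}^{\prime}-\frac{1}{2(n-1)}\mathbf{e}\mathbf{e}^{\prime}-I_{n-1}$ and using $\mathbf{v}^{\prime}\mathbf{e}=0$, you obtain explicit orthogonal eigenvectors $\mathbf{v}$, $\mathbf{e}$, and the $(n-3)$-dimensional complement of $\s\{\mathbf{v},\mathbf{e}\}$, carrying eigenvalues $0$, $-\frac{3}{2}$, and $-1$ respectively, with the multiplicities summing correctly to $n-1$. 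Your version is more elementary and self-contained --- no appeal to simultaneous diagonalization, no proof by contradiction, and it exhibits the eigenvectors, which the paper's argument never does; the paper's version has the stylistic advantage of exercising the commuting-circulant machinery that it reuses in Lemma \ref{suff lemma 2} and in the positive-semidefiniteness theorem, where no such explicit rank-one decomposition is available.
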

\begin{proof}
Let $\mathbf{v}=(1,-1,1,-1,\ldots,1,-1)^{\prime} \in \mathbb{R}^{n-1}$. As rows 
of $\cir(\mathbf{v^{\prime}})$ are either $\mathbf{v}^{\prime}$ or 
$-\mathbf{v}^{\prime}$, we have $\ra(\cir(\mathbf{v^{\prime}}))=1$. Hence $0$ 
is 
an eigenvalue of  
$\cir(\mathbf{v^{\prime}})$ with multiplicity $n-2$. Also, 
$\cir(\mathbf{v^{\prime}})\mathbf{v}=(n-1)\mathbf{v}$. Thus, 
$\sigma(\frac{1}{n-1}\cir(\mathbf{v^{\prime}}))=\left \{1^{[1]}, 0^{[n-2]} 
\right \}$. Since 
$\mathbf{y}=\frac{1}{n-1}\mathbf{v}-\mathbf{e_1}$, we write 
 $B=\cir(\mathbf{y^{\prime}})=\frac{1}{n-1}\cir(\mathbf{v^{\prime}})-I$. 
Hence $\sigma(B)=\left \{0^{[1]}, -1^{[n-2]} \right \}$. 
It is easy to verify that
$B\mathbf{v}=\mathbf{0}$ and 	
$J_{n-1}\mathbf{v}=(\mathbf{e^{\prime}}\mathbf{v})\mathbf{e}=\mathbf{0}$. This 
gives   $B-\frac{1}{2(n-1)}J_{n-1}$ is singular because 
$\left(B-\frac{1}{2(n-1)}J_{n-1}\right)\mathbf{v}=\mathbf{0}$.
Note that $B$ and 
$\frac{1}{2(n-1)}J_{n-1}$ commute by (\ref{circulant 
	properties}).  
Hence, by Theorem 
\ref{simultaneous diadonalization},  there exists a non-singular matrix $P$  
such that 
$P^{-1}BP=\diag(0,-1,-1,\ldots,-1)$ and 
$P^{-1}\left(\frac{1}{2(n-1)}J_{n-1}\right)P=
\diag(\lambda_1,\lambda_{2},\ldots,\lambda_{n-1})$, where 
$\lambda_1,\lambda_{2},\ldots,\lambda_{n-1}$ are the eigenvalues of 
$\frac{1}{2(n-1)}J_{n-1}$. Therefore,
$P^{-1}\left(B-\frac{1}{2(n-1)}J_{n-1}\right)P=
\diag(-\lambda_1,-1-\lambda_{2},\ldots,-1-\lambda_{n-1})$. We claim that 
$\lambda_1 = 0$. On the contrary, $\lambda_1 \neq 0$. Then 
$\lambda_1=\frac{1}{2}$ and $\lambda_j=0$ for  $2\leq j \leq n-1$ because 
the eigenvalues of
$\frac{1}{2(n-1)}J_{n-1}$ are 
$\frac{1}{2}$ and $0$ with multiplicities $1$ and $n-2$ respectively.  This 
implies that $\lambda_1 \neq 0$ and   $-1-\lambda_j \neq 0$ for all 
$j=2,3,\ldots,n-1$ and hence $B-\frac{1}{2(n-1)}J_{n-1}$ is non-singular, which 
is a contradiction.
Therefore, $\lambda_1=0$, the result follows.
\end{proof}

  \begin{lemma}\label{suff lemma 2}
  	Let $n\geq 5$ be an odd integer. Suppose 
  	$S=\cir(\mathbf{s}^{\prime})$  where 	
  	$\mathbf{s}=(2,1,0,\ldots,0,1)^{\prime}$ in
  	$\mathbb{R}^{n-1}$. If $A$ and $B$ are the matrices given in the 
  	Definition 
  	\ref{defn of L}, then  the following conditions hold.
  	\begin{multicols}{2}
  		\begin{itemize}
  			\item[(i)]$A(B+I)=O$
  			\item [(ii)] 	$B(B+I)=O$
  		\item [(iii)] $BS=-S$
  		\item [(iv)] 
$(A+B)S+2B=O$.
	\end{itemize}
\end{multicols}
\end{lemma}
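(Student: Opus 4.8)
The plan is to treat the four identities separately, exploiting that $A$, $B$, $S$ and $\cir(\mathbf{v}^{\prime})$ are all circulant of order $n-1$ and hence commute, together with the rank-one description of $B+I$. First I would record the structural fact that, since $n-1$ is even, the $i$-th row of $V:=\cir(\mathbf{v}^{\prime})$ equals $v_i\mathbf{v}^{\prime}$, so $V=\mathbf{v}\mathbf{v}^{\prime}$ and consequently $V^2=(\mathbf{v}^{\prime}\mathbf{v})\mathbf{v}\mathbf{v}^{\prime}=(n-1)V$. Recalling from Lemma \ref{eigenvalues of B} that $B=\frac{1}{n-1}V-I$, so that $B+I=\frac{1}{n-1}V$, identity $(ii)$ is immediate: $B(B+I)=\frac{1}{(n-1)^2}V^2-\frac{1}{n-1}V=\frac{1}{n-1}V-\frac{1}{n-1}V=O$. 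For $(iii)$ I would use $S\mathbf{v}=\mathbf{0}$, established in the proof of Theorem \ref{rank of H}; since $S$ is symmetric this gives $\mathbf{v}^{\prime}S=\mathbf{0}^{\prime}$, whence $(B+I)S=\frac{1}{n-1}\mathbf{v}(\mathbf{v}^{\prime}S)=O$, i.e. $BS=-S$.

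For $(i)$ the same rank-one identity gives $A(B+I)=\frac{1}{n-1}(A\mathbf{v})\mathbf{v}^{\prime}$, so it suffices to show $A\mathbf{v}=\mathbf{0}$. Because $\mathbf{v}=(1,-1,1,\ldots,-1)^{\prime}$ is the eigenvector of every circulant of order $n-1$ associated with evaluation at $-1$, one has $A\mathbf{v}=\bigl(\sum_{k=1}^{n-1}(-1)^{k+1}x_k\bigr)\mathbf{v}$, and I would compute this alternating sum directly. Using the symmetry of $\mathbf{x}$ it collapses to $6(n-1)\sum_{k}(-1)^{k+1}x_k=c_0+2\sum_{j=1}^{m-1}(-1)^j\alpha_j+(-1)^m\alpha_m$ with $c_0=n^2+4n-12$; since $(-1)^j\alpha_j=-(2m^2-6(m-j)^2+7)$ the middle term becomes a plain sum, and evaluating $\sum_{j=1}^{m-1}(2m^2-6(m-j)^2+7)=(m-1)(m+7)$ via $\sum_{i=1}^{m-1} i^2=\frac{(m-1)m(2m-1)}{6}$ and substituting $n=2m+1$ makes everything cancel to $0$. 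Hence $A\mathbf{v}=\mathbf{0}$ and $(i)$ follows.

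The real work is $(iv)$. Using $(iii)$ it reduces to $AS=S-2B$, and since $A$, $S$, $B$ are circulant this is equivalent to the first-row identity $\mathbf{x}^{\prime}S=(\mathbf{s}-2\mathbf{y})^{\prime}$. I would write $S=2I+C+C^{-1}$ with $C$ the basic cyclic shift $\cir((0,1,0,\ldots,0)^{\prime})$, so that the $i$-th coordinate of $\mathbf{x}^{\prime}S$ is $x_{i-1}+2x_i+x_{i+1}$ with indices taken modulo $n-1$. Both $(\mathbf{x}^{\prime}S)^{\prime}$ and $\mathbf{s}-2\mathbf{y}$ lie in $\Delta$ (the former by Lemma \ref{Delta set thorugh circulant matrix} since $\mathbf{s}$ has the form covered there, the latter because $\mathbf{s},\mathbf{y}\in\Delta$), so by the $\Delta$-symmetry it is enough to match coordinates $1,2,\ldots,m+1$. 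For the interior indices $3\le i\le m$ the second difference $\alpha_{i-2}+2\alpha_{i-1}+\alpha_i$ telescopes, using $(a+2)^2-2(a+1)^2+a^2=2$ with $a=m-i$, to $12(-1)^i$, which matches $6(n-1)(\mathbf{s}-2\mathbf{y})_i$; the boundary indices $i=1,2$ and the middle index $i=m+1$ (where the symmetry of $\mathbf{x}$ forces $x_m=x_{m+2}=\frac{\alpha_{m-1}}{6(n-1)}$) I would verify by the explicit substitutions, each reducing to the same scalar identities in $m$.

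I expect this coordinate-by-coordinate verification of $(iv)$ — and in particular keeping the index bookkeeping straight at the three boundary positions $i=1,2,m+1$ — to be the main obstacle, whereas $(i)$, $(ii)$ and $(iii)$ are short consequences of the rank-one structure of $B+I$ and of $S\mathbf{v}=\mathbf{0}$. Once all four identities are in hand, together with Lemma \ref{Ae and Be lemma} they supply exactly conditions $(i)$–$(vi)$ of Theorem \ref{necessary and sufficient conditions on L}, yielding the formula for $D\ssymbol{2}$ of Theorem \ref{Inverse Formula}.
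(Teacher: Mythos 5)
Your proof is correct, and it reaches the same computational core as the paper while packaging the linear algebra differently. Parts (iii) and (iv) are essentially identical to the paper's argument: the paper also reduces (iv) to the first-row identity $\mathbf{x}^{\prime}S+2\mathbf{y}^{\prime}=\mathbf{s}^{\prime}$ via the circulant calculus (\ref{circulant properties}), also invokes Lemma \ref{Delta set thorugh circulant matrix} to restrict attention to the first $m+1$ coordinates, and its computed values $p_1=\tfrac{4n-6}{n-1}$, $p_2=\tfrac{n+1}{n-1}$, $p_i=(-1)^i\tfrac{2}{n-1}$, $p_{m+1}=(-1)^{m+1}\tfrac{2}{n-1}$ are exactly what your telescoping identity $(a+2)^2-2(a+1)^2+a^2=2$ and boundary substitutions produce. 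The genuine divergence is in (i) and (ii): the paper diagonalizes $A$ and $B$ simultaneously (Theorem \ref{simultaneous diadonalization}, fed by the spectral description of $B$ in Lemma \ref{eigenvalues of B}), argues that all eigenvalues of $A(B+I)$ and $B(B+I)$ vanish, and needs the extra observation that the first column of the diagonalizing matrix $P$ must be a multiple of $\mathbf{v}$ in order to identify $\mu_1$ with $\mathbf{x}^{\prime}\mathbf{v}$. You instead use the explicit rank-one factorization $B+I=\tfrac{1}{n-1}\mathbf{v}\mathbf{v}^{\prime}$ (valid since $n-1$ is even, so $\cir(\mathbf{v}^{\prime})=\mathbf{v}\mathbf{v}^{\prime}$), which makes (ii) and (iii) one-line computations and reduces (i) to the same scalar identity $\mathbf{x}^{\prime}\mathbf{v}=0$ that the paper evaluates. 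Your route is more elementary — no spectral theory or simultaneous diagonalization at all — and shorter for this lemma; the trade-off is that the paper's spectral setup is not wasted machinery in context, since the same simultaneous-diagonalization framework is reused later to prove that $L$ is positive semidefinite, whereas your rank-one argument is local to this lemma.
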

\begin{proof}
Since $A$ and $B$ are circulant symmetric matrices, they commute by 
(\ref{circulant properties}) and are diagonalizable. Therefore, by Theorem 
\ref{simultaneous diadonalization}, they are simultaneously diagonalizable. 
That is, there exists an invertible matrix $P$ (whose columns are the  
eigenvectors of $A$ as well $B$) such that 
$P^{-1}AP=\diag(\mu_{1},\mu_{2},\ldots,\mu_{n-1})~ \text{and}~
P^{-1}BP=\diag(\beta_{1},\beta_{2},\ldots,\beta_{n-1}).$
 By Lemma \ref{eigenvalues of B}, 
$\beta_{1}=0$ and $\beta_{i}=-1$ for 
$2\leq i \leq n-1$.
\begin{itemize}
\item [(i)] To show $A(B+I)=O$, it is enough to prove that all the eigenvalues 
of $A(B+I)$ are zero. Note that the eigenvalues of $A(B+I)$ are 
$\mu_{i}(\beta_{i}+1)$, $i=1,2,\ldots,n-1$. We claim that 
$\mu_{i}(\beta_{i}+1)=0$ for all $i$. As 
$\beta_{1}=0$ and $\beta_{i}=-1$ for 
$i=2,3,\ldots, n-1$, it remains to prove $\mu_{1}=0$. Since $B 
\mathbf{v}=\mathbf{0}=\beta_{1}\mathbf{v}$ and the multiplicity of the 
eigenvalue $0$ of $B$ is $1$, the first column of $P$ is a scalar multiple of 
$\mathbf{v}$. Therefore,  
$A\mathbf{v}=\mu_{1}\mathbf{v}$. It is easy to see that 
$A\mathbf{v}=(\mathbf{x}^{\prime}\mathbf{v})\mathbf{v}$. Consider
	$\mathbf{x}^{\prime}\mathbf{v}
	=\frac{1}{6(n-1)}\left[(n^2+4n-12)+2 \left(
	\sum_{k=1}^{m-1}(-1)^k \alpha_k\right)
	+(-1)^{m}\alpha_{m}\right]$.
{Substituting $\alpha_k$'s and simplifying, we get}
%
\begin{align*}
	\mathbf{x}^{\prime}\mathbf{v}	
	&=\frac{1}{6(n-1)}\left[(n^2+4n-12)-2\bigg((7-4m^2)\sum_{k=1}^{m-1}1
	+12m\sum_{k=1}^{m-1}k-6\sum_{k=1}^{m-1}k^2\bigg)-(2m^2+7)\right].
\end{align*}
Using the formulae for the sum of first $m-1$ natural numbers and the sum 
of 
squares 
of 
first $m-1$ natural numbers, we have
	$\mathbf{x}^{\prime}\mathbf{v}
	=\frac{1}{6(n-1)}\left[(n^2+4n-12)-2(m^2+6m-7)-(2m^2+7)\right]=0$. Thus 
	$\mu_{1}=0$.
	\item[(ii)] It is clear that all the eigenvalues of $B(B+I)$ are zero and 
	hence $B(B+I)=O$.
	\item[(iii)] 	Let	$\mathbf{v}=(1,-1,1,-1,\ldots,1,-1)^{\prime} \in 
	\mathbb{R}^{n-1}$.
	It is easy to see that   
	$\mathbf{v}^{\prime}S=\mathbf{0}^{\prime}$. Note that 
	$\mathbf{y}=\frac{1}{n-1}\mathbf{v}-\mathbf{e_1}$.
	Since 
	$B=\cir(\mathbf{y}^{\prime})$, we have $BS= 
	\cir(\mathbf{y}^{\prime}S)= 
	\cir(\frac{1}{n-1}\mathbf{v}^{\prime}S-\mathbf{e_1}S)
	=\cir(-\mathbf{s^{\prime}})=-S$.
	\item[(iv)]
	Let	$\mathbf{x}^{\prime}S=(p_1,p_2,\ldots, p_{n-1})$. 
	Then 
	$p_i=\mathbf{x}^{\prime}S_{*i}$. From (\ref{c_k and alpha_k}) and (\ref{x 
	and y}), we have 
		\begin{align*}\label{p1}\notag
			p_1	=\mathbf{x}^{\prime}
			(2\mathbf{e_1}+\mathbf{e_2}+\mathbf{e_{n-1}})&=
			\frac{1}{6(n-1)}[2(n^2+4n-12)+2\alpha_1]\\
&=\frac{2}{6(n-1)}[(n^2+4n-12)+(-1)^2(2m^2-6(m-1)^2+7)]\\
&=\frac{4n-6}{n-1}.
	\end{align*}
	Note that
	$	p_2
	=\mathbf{x}^{\prime}(\mathbf{e_1}+2\mathbf{e_2}+\mathbf{e_3})
	=\frac{1}{6(n-1)}[(n^2+4n-12)+2\alpha_1+\alpha_2]
		=\frac{n+1}{n-1}$.
	Let $3 \leq i \leq m$. Then
		$p_i
		=\mathbf{x}^{\prime}(\mathbf{e_{i-1}}+	2\mathbf{e_i}+\mathbf{e_{i+1}})
			=\alpha_{i-2}+2\alpha_{i-1}+\alpha_{i}=(-1)^{i}\frac{2}{n-1}$.
	Similarly, we see that
	$p_{m+1}= 2(\alpha_{m-1}+\alpha_{m})=(-1)^{m+1}\frac{2}{n-1}$.
	%
	%
Since $\mathbf{x} \in 
\Delta$, we have 	$(\mathbf{x}^{\prime}S)^{\prime} \in 
\Delta$ 
	by Lemma \ref{Delta set thorugh circulant matrix}. Thus,
	\begin{equation*}
	\mathbf{x}^{\prime}S=
	\frac{1}{(n-1)}(4n-6,n+1,-2,2,-2,2,\ldots,2,-2,n+1).
	\end{equation*}
	Now it is clear that $	\mathbf{x}^{\prime}S+2\mathbf{y}^{\prime}=	
	\mathbf{s}^{\prime}$. Also, from (iii), 
	$\mathbf{y}^{\prime}S=-\mathbf{s}^{\prime}$. 
	Hence 
	$(A+B)S+2B=
	\cir(\mathbf{x}^{\prime}S+\mathbf{y}^{\prime}S+2\mathbf{y}^{\prime})
	=O$.
\end{itemize}
\end{proof}
\begin{proof}[Proof of Theorem \ref{Inverse Formula}]
The result follows from Theorem \ref{necessary and sufficient conditions on L} 
and Lemmas \ref{Ae and Be lemma} and \ref{suff lemma 2}.
\end{proof}

In the following, we study two properties of the Laplacian-like matrix $L$ 
defined in (\ref{L defn}). First, we show that $L$ is a positive semidefinite 
matrix. Let us recall that an $n \times n$ real 
symmetric matrix matrix $M$ is said to 
be \textit{positive semidefinite (positive definite)} if 
$\mathbf{z^{\prime}}M\mathbf{z}\geq 0$ (respectively, 
$\mathbf{z^{\prime}}M\mathbf{z}> 0$ ) for all non-zero $\mathbf{z} \in 
\mathbb{R}^n$. We 
abbreviate the positive semidefinite (positive definite) as psd (respectively, 
pd). 

 To prove $L$ is psd, 
we need to find the eigenvalues of 
$\cir(\mathbf{s^{\prime}})$, where $\mathbf{s}=(2,1,0,0,\ldots,0,1)^{\prime}$, 
which follows from the next theorem.
\begin{theorem}(\cite{Zhang})\label{circulant eigenvalues}
Let $\mathbf{m}=(m_0,m_1,\ldots, m_{n-1})^{\prime} \in \mathbb{R}^n$ and 
$f(x)=m_0+m_1x+m_2x^2+\cdots+m_{n-1}x^{n-1}$. Then the eigenvalues of 
$M=\cir(\mathbf{m^{\prime}})$ are $f(\omega^j)$, $j=0,1,2,\ldots,n-1$ where 
$\omega=\cos(\frac{2\pi}{n})+i\sin(\frac{2\pi}{n})$ is an $n^{\text{th}}$ 
primitive root of unity.
\end{theorem}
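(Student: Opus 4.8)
The plan is to realize $M=\cir(\mathbf{m}^{\prime})$ as a polynomial in a single cyclic permutation matrix and then read off its spectrum from that representation. Let $P=\cir\big((0,1,0,\ldots,0)\big)$ be the basic circulant of order $n$, i.e.\ the matrix whose $(i,j)$ entry equals $1$ precisely when $j\equiv i+1 \pmod n$ and is $0$ otherwise. Using the definition of $\cir$ from Section \ref{prelims} together with the product rule $\cir(\mathbf{x}^{\prime})\cir(\mathbf{y}^{\prime})=\cir(\mathbf{x}^{\prime}\cir(\mathbf{y}^{\prime}))$ in (\ref{circulant properties}), one checks by induction that $P^{k}=\cir(\mathbf{e_{k+1}}^{\prime})$, the circulant whose defining vector is the $(k{+}1)$-th standard basis vector. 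Reading off the entries then gives
\begin{equation*}
	M=\cir(\mathbf{m}^{\prime})=m_0 I_n+m_1 P+m_2 P^2+\cdots+m_{n-1}P^{n-1}=f(P).
\end{equation*}
First I would confirm this identity against the paper's orientation of $\cir$ (each row is the preceding row shifted one place to the right), since this fixes the direction of the shift $P$ and is the one place where the indexing convention enters.

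Next I would diagonalize $P$. Its characteristic polynomial is $x^{n}-1$, so its eigenvalues are exactly the $n$-th roots of unity $\omega^{j}=\cos(\tfrac{2\pi j}{n})+i\sin(\tfrac{2\pi j}{n})$ for $j=0,1,\ldots,n-1$. A direct computation shows that the Fourier vector $\mathbf{f}_j=(1,\omega^{j},\omega^{2j},\ldots,\omega^{(n-1)j})^{\prime}$ satisfies $P\mathbf{f}_j=\omega^{j}\mathbf{f}_j$, because applying $P$ cyclically shifts the coordinates of $\mathbf{f}_j$, which has the effect of multiplying every entry by $\omega^{j}$. These $n$ vectors are the columns of a Vandermonde matrix in the distinct nodes $\omega^{0},\ldots,\omega^{n-1}$, hence are linearly independent, so $\{\mathbf{f}_0,\ldots,\mathbf{f}_{n-1}\}$ is a full eigenbasis for $P$ in $\mathbb{C}^{n}$.

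Finally, since $M=f(P)$ is a polynomial in $P$, each $\mathbf{f}_j$ is automatically an eigenvector of $M$ with $M\mathbf{f}_j=f(P)\mathbf{f}_j=f(\omega^{j})\mathbf{f}_j$. Because the $\mathbf{f}_j$ span $\mathbb{C}^{n}$, the numbers $f(\omega^{0}),f(\omega^{1}),\ldots,f(\omega^{n-1})$ are precisely the $n$ eigenvalues of $M$, counted with multiplicity, which is the assertion. There is no genuine obstacle in this argument; the only step requiring care is the first one, namely matching the shift direction of $P$ to the particular $\cir$ convention of this paper, so that the eigenvalues come out as $f(\omega^{j})$ rather than $f(\omega^{-j})$. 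Everything after that is routine linear algebra.
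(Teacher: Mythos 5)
Your proof is correct, and it is the standard argument: the paper itself offers no proof of this theorem (it is quoted directly from Zhang's \emph{Matrix Theory}), and your realization of $\cir(\mathbf{m}^{\prime})=f(P)$ as a polynomial in the cyclic shift $P$, diagonalized by the Fourier eigenbasis $\mathbf{f}_j$, is precisely the textbook proof. One small remark: your concern about the shift direction is immaterial to the conclusion, since $j\mapsto n-j$ permutes the exponents modulo $n$, so the multisets $\{f(\omega^{j})\}_{j=0}^{n-1}$ and $\{f(\omega^{-j})\}_{j=0}^{n-1}$ coincide; either orientation of $P$ yields the stated spectrum.
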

\begin{remark}\label{eigenvalues of S}
	Consider the matrix $S=\cir(\mathbf{s^{\prime}})$ where 	
	$\mathbf{s}=(2,1,0,\ldots,0,1)^{\prime} \in
	\mathbb{R}^{n-1}$. In this case, $f(x)=2+x+x^{n-2}$. Therefore, 
	$f(\omega_j)=2+\omega_j+\omega_j^{-1}$ because $\omega_j^{n-1}=1$. From 
	Theorem \ref{circulant 
		eigenvalues}, the eigenvalues of $S$ are $4\cos^2(\frac{\pi j}{n-1})$, 
		$j=0,1,2,\ldots,n-2$.
\end{remark}
Using the same arguments as given in the proof of Theorem $7.2 (iv)$ in 
\cite{Balaji 
	odd wheel graph}, it can be shown that $L$ is psd. However, we wish to 
give a different proof based on the following result involving the Schur 
complement.
\begin{theorem}(\cite{Horn})\label{psd equiv cond for block marix}
	Let $M_1$ and $M_3$ be square matrices.
	If $M=\begin{bmatrix}
	M_1 & M_2\\
	M_2^{\prime}  & M_3
\end{bmatrix}$ is a symmetric matrix then the following statements are true.
\begin{itemize}
\item[(i)] Let $M_1$ be pd. Then
$M$ is psd if and only if $M_3-M_2^{\prime}M_1^{-1}M_2$ is psd.
\item[(ii)]  Let $M_3$ be pd. Then $M$ is psd if and only if 
$M_1-M_2M_3^{-1}M_2^{\prime}$ is psd.
\end{itemize}
\end{theorem}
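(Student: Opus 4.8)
The plan is to prove part (i) by exhibiting an explicit block congruence that block-diagonalizes $M$, and then to read off psd-ness one block at a time; part (ii) will follow from (i) by a symmetry argument that interchanges the two diagonal blocks. Throughout I would write $\Sigma:=M_3-M_2^{\prime}M_1^{-1}M_2$ for the Schur complement appearing in (i), which is well defined because $M_1$, being pd, is invertible.

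First I would establish the factorization
\[
M=P^{\prime}\begin{bmatrix} M_1 & O \\ O & \Sigma \end{bmatrix}P,
\qquad
P=\begin{bmatrix} I & M_1^{-1}M_2 \\ O & I \end{bmatrix},
\]
which is verified by a direct block multiplication, using that $M_1$ is symmetric so that $(M_1^{-1})^{\prime}=M_1^{-1}$ and hence $P^{\prime}=\left[\begin{smallmatrix} I & O \\ M_2^{\prime}M_1^{-1} & I \end{smallmatrix}\right]$. Since $P$ is unit upper triangular it is invertible, so $M$ is congruent to the block-diagonal matrix $\diag(M_1,\Sigma)$. Because congruence preserves positive semidefiniteness (equivalently, by Sylvester's law of inertia), $M$ is psd if and only if $\diag(M_1,\Sigma)$ is psd; and a symmetric block-diagonal matrix is psd exactly when each diagonal block is. As $M_1$ is pd, hence psd, this reduces to the condition that $\Sigma$ is psd, proving (i).

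For (ii) I would reduce to (i) by swapping the blocks. With the orthogonal involution $Q=\left[\begin{smallmatrix} O & I \\ I & O \end{smallmatrix}\right]$ one has $Q^{\prime}MQ=\left[\begin{smallmatrix} M_3 & M_2^{\prime} \\ M_2 & M_1 \end{smallmatrix}\right]$, a symmetric matrix whose leading block $M_3$ is pd; applying (i) to $Q^{\prime}MQ$ shows it is psd if and only if $M_1-M_2M_3^{-1}M_2^{\prime}$ is psd, and since $Q$ is invertible, $M$ and $Q^{\prime}MQ$ are congruent and therefore psd together. I do not expect a genuine obstacle here, as this is a classical Schur-complement fact; the only point needing care is the bookkeeping in the block product that establishes the factorization, and in particular the use of the symmetry of $M_1$ to obtain the transpose factor $P^{\prime}$. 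A congruence-free alternative would complete the square: for $z=(x^{\prime},y^{\prime})^{\prime}$ one checks the identity $z^{\prime}Mz=(x+M_1^{-1}M_2y)^{\prime}M_1(x+M_1^{-1}M_2y)+y^{\prime}\Sigma y$, whence both implications of (i) read off immediately, by letting $x$ range freely for sufficiency and by setting $x=-M_1^{-1}M_2y$ for necessity.
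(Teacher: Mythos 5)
Your proof is correct. Note, however, that the paper does not prove this statement at all: it is quoted as a known result from Horn and Johnson's \emph{Matrix Analysis} (reference \cite{Horn}), so there is no internal proof to compare against. Your argument is the standard one for this classical fact: the block congruence $M=P^{\prime}\diag(M_1,\Sigma)P$ with unit-triangular $P$ is verified correctly (including the use of the symmetry of $M_1$ to identify $P^{\prime}$), congruence by an invertible matrix does preserve positive semidefiniteness, and the reduction of part (ii) to part (i) via the permutation $Q$ is handled properly, with the off-diagonal block correctly becoming $M_2^{\prime}$ so that the resulting Schur complement is $M_1-M_2M_3^{-1}M_2^{\prime}$. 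The completing-the-square identity you give as an alternative is also correct and yields both implications directly; either route would serve as a self-contained proof of the cited theorem.
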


\begin{theorem}
	Let $L$ be the matrix given in  Definition \ref{defn of L}. Then $L$ is a 
	positive 
	semidefinite matrix.
\end{theorem}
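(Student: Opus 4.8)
The plan is to strip off the two "easy" pieces of $L$ by successive Schur complements and then certify positive semidefiniteness of the remaining circulant block by simultaneous diagonalization, reusing the eigenvalue data already assembled in Lemma~\ref{eigenvalues of B}, Lemma~\ref{suff lemma 2} and Remark~\ref{eigenvalues of S}. Throughout, write $m=\frac{n-1}{2}$.

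First I would partition $L=\left[\begin{smallmatrix}M_1 & M_2\\ M_2' & M_3\end{smallmatrix}\right]$ with $M_3=I_{n-1}$, $M_2=\left[\begin{smallmatrix}\mathbf 0'\\ B\end{smallmatrix}\right]$ and $M_1=\left[\begin{smallmatrix}\frac{n-1}{2} & -\frac12\mathbf e'\\[2pt] -\frac12\mathbf e & A\end{smallmatrix}\right]$. Since $M_3=I_{n-1}$ is positive definite, Theorem~\ref{psd equiv cond for block marix}(ii) tells us that $L$ is psd if and only if $M_1-M_2M_3^{-1}M_2'$ is psd. Because $B$ is symmetric, $M_2M_2'=\left[\begin{smallmatrix}0 & \mathbf 0'\\ \mathbf 0 & B^2\end{smallmatrix}\right]$, so the first reduction target is the $n\times n$ symmetric matrix $N:=\left[\begin{smallmatrix}\frac{n-1}{2} & -\frac12\mathbf e'\\[2pt] -\frac12\mathbf e & A-B^2\end{smallmatrix}\right]$.

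Next, the scalar corner $\frac{n-1}{2}>0$ is positive definite, so applying Theorem~\ref{psd equiv cond for block marix}(i) to $N$ shows that $N$ is psd if and only if its Schur complement $T:=(A-B^2)-\big(-\tfrac12\mathbf e\big)\big(\tfrac{2}{n-1}\big)\big(-\tfrac12\mathbf e'\big)=A-B^2-\frac{1}{2(n-1)}J_{n-1}$ is psd. Thus the whole statement collapses to proving that the single $(n-1)\times(n-1)$ symmetric matrix $T$ is psd, which is the crux.

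For the final step I would diagonalize $T$ simultaneously with the circulants $S$, $B$, $J_{n-1}$: all of $A,B,J_{n-1},S$ are symmetric circulants, hence pairwise commuting and simultaneously diagonalizable by Theorem~\ref{simultaneous diadonalization}. Fix a common eigenbasis indexed by the Fourier modes $j=0,1,\ldots,n-2$, with $S$-eigenvalue $\sigma_j=4\cos^2\!\big(\tfrac{\pi j}{n-1}\big)$ (Remark~\ref{eigenvalues of S}), $B$-eigenvalue $\beta_j$ (with $\beta_m=0$ at the mode $\mathbf v$ and $\beta_j=-1$ otherwise, by Lemma~\ref{eigenvalues of B}), $J_{n-1}$-eigenvalue $\nu_j$ (equal to $n-1$ at $j=0$ and $0$ otherwise), and $A$-eigenvalue $\mu_j$. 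Combining Lemma~\ref{suff lemma 2}(iii) and (iv) gives $AS=S-2B$, so on each mode $\mu_j\sigma_j=\sigma_j-2\beta_j$. For $j\neq m$ we have $\sigma_j>0$ and $\beta_j=-1$, whence $\mu_j=1+\frac{2}{\sigma_j}$, so the $T$-eigenvalue $\tau_j=\mu_j-\beta_j^2-\frac{1}{2(n-1)}\nu_j$ equals $\frac32-1-\frac12=0$ at $j=0$ and $\frac{2}{\sigma_j}>0$ for $j\neq 0,m$. The delicate point, and the main obstacle, is the degenerate mode $j=m$: there $\sigma_m=0$ makes the relation $AS=S-2B$ vacuous, so one cannot read $\mu_m$ off from $S$. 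Instead I would fall back on $A\mathbf v=(\mathbf x'\mathbf v)\mathbf v=\mathbf 0$ (the vanishing of $\mathbf x'\mathbf v$ established inside Lemma~\ref{suff lemma 2}(i)) together with $B\mathbf v=\mathbf 0$ and $J_{n-1}\mathbf v=\mathbf 0$ to conclude $\tau_m=0$. Since every $\tau_j\ge 0$, the matrix $T$ is psd, and unwinding the two Schur complements yields that $L$ is psd.
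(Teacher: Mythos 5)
Your proof is correct and takes essentially the same route as the paper: two Schur-complement reductions via Theorem \ref{psd equiv cond for block marix} (the paper strips the scalar corner first and the $I_{n-1}$ block second, you do the reverse, but since $B^2=-B$ both land on the same reduced matrix $A+B-\tfrac{1}{2(n-1)}J_{n-1}$), followed by simultaneous diagonalization of the circulants using Lemmas \ref{eigenvalues of B} and \ref{suff lemma 2} and the relation $AS=S-2B$, with the mode $\mathbf{v}$ handled separately through $\mathbf{x}^{\prime}\mathbf{v}=0$. The only cosmetic difference is that you compute the eigenvalues of the reduced matrix exactly ($0$ at the modes $j=0$ and $j=m$, and $2/\sigma_j>0$ elsewhere), whereas the paper settles for the bounds $\mu_j\geq\tfrac{3}{2}$ and $\delta_j\geq-\tfrac{3}{2}$.
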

\begin{proof}
By $(i)$ of Theorem \ref{psd equiv cond for 
	block 
	marix}, it suffices to show that
$X=\left[\begin{smallmatrix}
		A-\frac{1}{2(n-1)}J_{n-1} &&& 	B\\[3pt]
	 B &&& 	I_{n-1}
\end{smallmatrix}\right]$ is psd. Note that $B^2=-B$ by $(ii)$ of Lemma 
\ref{suff lemma 
2}. Now applying part $(ii)$ of Theorem \ref{psd equiv cond for block 
marix}  to $X$, we get $L$ is psd if and only if 
$A+B-\frac{1}{2(n-1)}J_{n-1}$ is psd. If we prove all the eigenvalues of 
$A+B-\frac{1}{2(n-1)}J_{n-1}$ are non-negative, then the desired result follows.
Since $A$, $B$, $S$ and $B-\frac{1}{2(n-1)}J_{n-1}$ are symmetric circulant  
matrices, by (\ref{circulant properties}) and Theorem 
\ref{simultaneous diadonalization},  there exists an invertible matrix $P$  
such that 
$P^{-1}AP=:\Lambda_1=\diag(\mu_{1},\mu_{2},\ldots,\mu_{n-1})$,  
$P^{-1}BP=:\Lambda_2=\diag(0,-1,-1,\ldots,-1)$,
$P^{-1}SP=:\Lambda_3=\diag(\gamma_{1},\gamma_{2},\ldots,\gamma_{n-1})$,  and 
$P^{-1}\left(B-\frac{1}{2(n-1)}J_{n-1}\right)P
=\diag(\delta_{1},\delta_{2},\ldots,\delta_{n-1})$ where the diagonal entries 
of 
$\Lambda_2$ are obtained from Lemma \ref{eigenvalues of B}.
Then $\mu_{1}=0$ follows from 
$\Lambda_1(\Lambda_2+I)=O$ by $(i)$ of Lemma \ref{suff lemma 2}. We have 
$(\Lambda_2+I)\Lambda_3=O$ by the identity $BS=-S$ from Lemma \ref{suff lemma 
2}. This implies $\gamma_{1}=0$.  We now claim that 
$\delta_{1}=0$. The multiplicities of the eigenvalue $0$ with respect to $B$ 
and $B-\frac{1}{2(n-1)}J_{n-1}$ are equal to $1$, see Lemma \ref{eigenvalues of 
B}. Also, 
$B\mathbf{v}=\left(B-\frac{1}{2(n-1)}J_{n-1}\right)\mathbf{v}=\mathbf{0}$ where 
$\mathbf{v}=(1,-1,1,-1,\ldots,1,-1)^{\prime}\in \mathbb{R}^{n-1}$. Therefore, 
the first column of $P$ must be a scalar multiple of $\mathbf{v}$. Hence 
$\delta_{1}=0$. To complete the proof, we need to show that $\mu_j+\delta_{j} 
\geq 
0$ for all $j=2,3,\ldots,n-1$.  Let 
$2 \leq j \leq n-1$. From Lemma \ref{eigenvalues of B}, it is clear that 
$\delta_{j}$ is either $-1$ or $-\frac{3}{2}$. In view of this, it is enough to 
prove that $\mu_{j}\geq \frac{3}{2}$. Using the fact that $\ra(S)=n-2$ (see 
Theorem \ref{rank of H}) 
and by Remark 
\ref{eigenvalues of S}, we get $\gamma_{j}>0$.  By 
$(iii)$ and $(iv)$ of Lemma \ref{suff lemma 2}, we write $(A-I)S=-2B$ which 
gives $(\Lambda_1-I)\Lambda_3=-2\Lambda_2$. This yields that 
$(\mu_{j}-1)\gamma_{j}=2$. This implies 
$\mu_j=\frac{2}{\gamma_j}+1=\frac{2}{4\cos^2\left(\frac{\pi 
j}{n-1}\right)}+1\geq \frac{3}{2}$.
This completes the proof.
\end{proof}
In the next result, we find the rank of $L$. We will make use of the following 
theorem.
\begin{theorem}(\cite{Horn})\label{rank inequality}
	Let $A$ and $B$ be symmetric matrices of same order. Then $\ra(A+B) \leq 
	\ra(A)+\ra(B)$. Furthermore, equality holds if and only if $R(A)\bigcap 
	R(B)=\{\mathbf{0}\}$.
\end{theorem}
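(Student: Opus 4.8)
The plan is to split the statement into the subadditivity inequality and the equality characterisation, and to drive both through the range containment $R(A+B)\subseteq R(A)+R(B)$ together with the self-adjointness relation $R(M)=N(M)^{\perp}$ that is available for symmetric $M$. First I would record that $R(A+B)\subseteq R(A)+R(B)$ holds for arbitrary matrices, since every column of $A+B$ is a sum of the corresponding columns of $A$ and $B$. Combining this with the Grassmann dimension formula $\dim\!\big(R(A)+R(B)\big)=\ra(A)+\ra(B)-\dim\!\big(R(A)\cap R(B)\big)$ yields at once the sharper bound
\[
\ra(A+B)\le \ra(A)+\ra(B)-\dim\!\big(R(A)\cap R(B)\big),
\]
which in particular gives the claimed inequality $\ra(A+B)\le \ra(A)+\ra(B)$. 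This sharper bound also settles one direction of the equality criterion for free: if $\ra(A+B)=\ra(A)+\ra(B)$, then $\dim\!\big(R(A)\cap R(B)\big)\le 0$, forcing $R(A)\cap R(B)=\{\mathbf{0}\}$. I note that this half uses nothing about symmetry.

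The converse is where symmetry is indispensable, and I expect it to be the main point of the argument. Assuming $R(A)\cap R(B)=\{\mathbf{0}\}$, the key step is to show that the null spaces behave additively, namely $N(A+B)=N(A)\cap N(B)$. The inclusion $N(A)\cap N(B)\subseteq N(A+B)$ is immediate. For the reverse inclusion, I would take $\mathbf{x}$ with $(A+B)\mathbf{x}=\mathbf{0}$, so that $A\mathbf{x}=-B\mathbf{x}$; since the left side lies in $R(A)$ and the right side lies in $R(B)$, this common vector belongs to $R(A)\cap R(B)=\{\mathbf{0}\}$, whence $A\mathbf{x}=\mathbf{0}=B\mathbf{x}$ and $\mathbf{x}\in N(A)\cap N(B)$.

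With $N(A+B)=N(A)\cap N(B)$ in hand, I would pass to dimensions, writing $n$ for the common order. Then $\ra(A+B)=n-\dim\!\big(N(A)\cap N(B)\big)$, and the Grassmann formula gives $\dim\!\big(N(A)\cap N(B)\big)=\dim N(A)+\dim N(B)-\dim\!\big(N(A)+N(B)\big)$. Here symmetry enters decisively through $N(A)=R(A)^{\perp}$ and $N(B)=R(B)^{\perp}$, which yield $N(A)+N(B)=R(A)^{\perp}+R(B)^{\perp}=\big(R(A)\cap R(B)\big)^{\perp}=\{\mathbf{0}\}^{\perp}=\mathbb{R}^{n}$, so $\dim\!\big(N(A)+N(B)\big)=n$. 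Substituting $\dim N(A)=n-\ra(A)$ and $\dim N(B)=n-\ra(B)$ collapses the count to $\dim\!\big(N(A)\cap N(B)\big)=n-\ra(A)-\ra(B)$, and therefore $\ra(A+B)=\ra(A)+\ra(B)$.

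The one point I would flag for care is the legitimacy of the chain $N(A)+N(B)=\big(R(A)\cap R(B)\big)^{\perp}$: it rests on the identity $R(M)=N(M)^{\perp}$, valid for symmetric $M$ because $R(M)=R(M^{\prime})=N(M)^{\perp}$, and on the standard subspace fact $U^{\perp}+V^{\perp}=(U\cap V)^{\perp}$. Both are routine, so the overall proof is short; but it is precisely at these two places that the symmetry hypothesis cannot be dropped, since the converse implication genuinely fails for non-symmetric matrices while the subadditivity inequality continues to hold in general.
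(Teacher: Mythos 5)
Your argument is correct, but there is nothing in the paper to compare it against: the paper states this result as a known fact with a citation to Horn and Johnson and uses it as a black box (in the proof that $\ra(L)=2n-3$), giving no proof of its own. What you have written is therefore a self-contained proof of the cited theorem rather than an alternative to an in-paper argument.

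On the merits, both halves check out. The sharpened bound $\ra(A+B)\le \ra(A)+\ra(B)-\dim\big(R(A)\cap R(B)\big)$, obtained from $R(A+B)\subseteq R(A)+R(B)$ and the Grassmann formula, correctly yields subadditivity and the forward implication of the equality criterion without any symmetry hypothesis. In the converse, the identity $N(A+B)=N(A)\cap N(B)$ under $R(A)\cap R(B)=\{\mathbf{0}\}$ is proved correctly (the vector $A\mathbf{x}=-B\mathbf{x}$ lies in the trivial intersection), and the dimension count through $N(A)=R(A)^{\perp}$ — valid precisely because $A=A^{\prime}$, via $R(A)=R(A^{\prime})=N(A)^{\perp}$ — together with $U^{\perp}+V^{\perp}=(U\cap V)^{\perp}$ is sound. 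Your closing caveat is also accurate and worth keeping: for general matrices the Marsaglia--Styan criterion demands trivial intersection of both the column spaces and the row spaces, and the symmetric hypothesis is what collapses these into the single condition $R(A)\cap R(B)=\{\mathbf{0}\}$. A concrete witness that your converse fails without symmetry is $A=\left[\begin{smallmatrix}0&1\\0&0\end{smallmatrix}\right]$, $B=\left[\begin{smallmatrix}0&0\\0&1\end{smallmatrix}\right]$: here $R(A)\cap R(B)=\{\mathbf{0}\}$ yet $\ra(A+B)=1<\ra(A)+\ra(B)$.
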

\begin{theorem}
	Let $L$ be the matrix given in $(\ref{L defn})$. Then the rank of $L$ 
	is $2n-3$.
\end{theorem}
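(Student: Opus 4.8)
The plan is to pin down $\ra(L)$ by feeding the formula just proved in Theorem \ref{Inverse Formula} back into the rank identity of Theorem \ref{rank inequality}. Rearranging $D\ssymbol{2}=-\frac{1}{2}L+\frac{4}{3(n-1)}\mathbf{w}\mathbf{w^{\prime}}$ gives $-2D\ssymbol{2}=L-\frac{8}{3(n-1)}\mathbf{w}\mathbf{w^{\prime}}$, which exhibits $-2D\ssymbol{2}$ as the sum of the two symmetric matrices $L$ and $-\frac{8}{3(n-1)}\mathbf{w}\mathbf{w^{\prime}}$. Since scaling by $-2$ preserves rank and $R(D^{\prime})=R(D\ssymbol{2})$ gives $\ra(D\ssymbol{2})=\ra(D)=2n-2$ (Theorem \ref{rank of H}, $n$ odd), while $\ra(\mathbf{w}\mathbf{w^{\prime}})=1$, the whole computation reduces to checking that Theorem \ref{rank inequality} applies with \emph{equality}.

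Thus the key step is to verify the equality hypothesis $R(L)\cap R(\mathbf{w}\mathbf{w^{\prime}})=\{\mathbf{0}\}$. As $R(\mathbf{w}\mathbf{w^{\prime}})=\s\{\mathbf{w}\}$, this is the statement $\mathbf{w}\notin R(L)$, and I would deduce it using only that $L$ is symmetric and Laplacian-like. Indeed $L\mathbf{e}=\mathbf{0}$ means $\mathbf{e}\in N(L)=R(L)^{\perp}$, so every vector in $R(L)$ is orthogonal to $\mathbf{e}$; but $\mathbf{e^{\prime}}\mathbf{w}=1\neq 0$, so $\mathbf{w}\notin R(L)$ and no nonzero multiple of $\mathbf{w}$ lies in $R(L)$. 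Hence $R(L)\cap\s\{\mathbf{w}\}=\{\mathbf{0}\}$.

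With the hypothesis in hand, Theorem \ref{rank inequality} gives $2n-2=\ra(-2D\ssymbol{2})=\ra(L)+\ra(\mathbf{w}\mathbf{w^{\prime}})=\ra(L)+1$, so $\ra(L)=2n-3$. I expect the only genuine obstacle to be the intersection condition, and it is mild, resting entirely on the Laplacian-like property $L\mathbf{e}=\mathbf{0}$ together with $\mathbf{e^{\prime}}\mathbf{w}=1$; the remainder is bookkeeping about rank under scaling and the Moore-Penrose range identity.

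As a self-contained alternative that avoids Theorem \ref{Inverse Formula}, one could instead compute $N(L)$ directly. Writing $\mathbf{z}=(\eta,\mathbf{p^{\prime}},\mathbf{q^{\prime}})^{\prime}$ and solving $L\mathbf{z}=\mathbf{0}$ block by block, the bottom block gives $\mathbf{q}=-B\mathbf{p}$, and substituting into the middle block together with $B^2=-B$ (Lemma \ref{suff lemma 2}) yields $(A+B)\mathbf{p}=\tfrac{1}{2}\eta\mathbf{e}$. Using $A\mathbf{e}=\tfrac{3}{2}\mathbf{e}$, $B\mathbf{e}=-\mathbf{e}$ (Lemma \ref{Ae and Be lemma}) one has $(A+B)\mathbf{e}=\tfrac12\mathbf{e}$, and the spectral facts $A\mathbf{v}=B\mathbf{v}=\mathbf{0}$ (from the proofs of Lemmas \ref{eigenvalues of B} and \ref{suff lemma 2}) identify $N(A+B)=\s\{\mathbf{v}\}$, forcing $\mathbf{p}=\eta\mathbf{e}+t\mathbf{v}$ and then $\mathbf{q}=\eta\mathbf{e}$. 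This shows $N(L)=\s\{\mathbf{e},\,(0,\mathbf{v^{\prime}},\mathbf{0^{\prime}})^{\prime}\}$ is two-dimensional, so $\ra(L)=(2n-1)-2=2n-3$, in agreement with the argument above.
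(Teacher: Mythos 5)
Your first argument is, modulo the harmless rescaling by $-2$, exactly the paper's proof: the paper likewise shows $\mathbf{w}\notin R(L)$ from $\mathbf{e^{\prime}}L=\mathbf{0^{\prime}}$ and $\mathbf{e^{\prime}}\mathbf{w}=1$, applies the equality case of Theorem \ref{rank inequality} to the decomposition in Theorem \ref{Inverse Formula}, and concludes via $\ra(D\ssymbol{2})=\ra(D(H_n))=2n-2$ from Theorem \ref{rank of H}. On that route there is nothing to change.

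Your self-contained alternative is a genuinely different route (it never invokes Theorem \ref{Inverse Formula}), but as written it has one gap: the facts $A\mathbf{v}=B\mathbf{v}=\mathbf{0}$ give only the inclusion $\s\{\mathbf{v}\}\subseteq N(A+B)$, not the equality $N(A+B)=\s\{\mathbf{v}\}$ that your argument needs. To close it, diagonalize $A$, $B$, $S$ simultaneously (they are commuting symmetric circulants, so Theorem \ref{simultaneous diadonalization} applies) and use the identity $(A-I)S=-2B$ obtained from parts (iii) and (iv) of Lemma \ref{suff lemma 2}: on each common eigenvector other than $\mathbf{v}$, the eigenvalue of $B$ is $-1$ (Lemma \ref{eigenvalues of B}) and the eigenvalue $\gamma_j$ of $S$ is positive (Remark \ref{eigenvalues of S} together with $\ra(S)=n-2$), so the corresponding eigenvalue of $A+B$ is $\mu_j-1=2/\gamma_j>0$; hence $0$ is a simple eigenvalue of $A+B$ and $N(A+B)=\s\{\mathbf{v}\}$. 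This is precisely the computation the paper carries out to prove that $L$ is positive semidefinite, so the ingredients are available. You should also note that the first block equation $\frac{n-1}{2}\eta=\frac{1}{2}\mathbf{e^{\prime}}\mathbf{p}$ is automatically satisfied by $\mathbf{p}=\eta\mathbf{e}+t\mathbf{v}$, since $\mathbf{e^{\prime}}\mathbf{v}=0$. With these points added, your conclusion $N(L)=\s\{\mathbf{e},(0,\mathbf{v^{\prime}},\mathbf{0^{\prime}})^{\prime}\}$ is correct and gives $\ra(L)=(2n-1)-2=2n-3$ independently of the Moore--Penrose formula; what the paper's (and your first) argument buys is brevity, since it reuses Theorem \ref{Inverse Formula} instead of redoing the spectral analysis.
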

\begin{proof}
	For 	$\mathbf{w}=\frac{1}{4}\left(5-n,-\mathbf{e}^{\prime}, 
	2\mathbf{e}^{\prime}\right)^{\prime}\in \mathbb{R}^{2n-1}$, we have 
	$\mathbf{e}^{\prime}\mathbf{w}=1$. We claim that $\mathbf{w} \notin R(L)$. 
	On the contrary, suppose $L\mathbf{z}=\mathbf{w}$ for some $\mathbf{z} \in 
	\mathbb{R}^{2n-1}$. Since $\mathbf{e}^{\prime}L=\mathbf{0}^{\prime}$, 
	 $\mathbf{e}^{\prime}L\mathbf{z}=\mathbf{e}^{\prime}\mathbf{w}=0$ which is 
	 impossible. Hence $\mathbf{w} \notin R(L)$. Thus $R(L)\bigcap 
	R(\mathbf{w}\mathbf{w}^{\prime})=\{\mathbf{0}\}$. By Theorems \ref{Inverse 
	Formula} and \ref{rank inequality}, $\ra(L)=\ra(D(H_n)\ssymbol{2})-1$. 
	Since $\ra(D(H_n))=\ra(D(H_n)\ssymbol{2})$, the result follows from Theorem 
	\ref{rank of H}.
\end{proof}

\end{document}